\newcounter{abb}
\newcounter{tab}
\newcommand{\RR}{\mathbb{R}}
\newcommand{\KO}{\mathrm{KO}}
\newcommand{\CC}{\mathbb{C}}
\newcommand{\HH}{\mathbb{H}}
\newcommand{\UU}{\mathrm U}
\newcommand{\Sp}{\mathrm{Sp}}
\newcommand{\R}{\mathbb{R}}
\newcommand{\N}{\mathbb{N}}
\newcommand{\ZZ}{\mathbb{Z}}
\newcommand{\Inj}{\mathrm{Inj}}
\newcommand{\Iso}{\mathrm{Iso}}
\newcommand{\OO}{\mathrm{O}}
\newcommand{\CP}{\mathbb{C} P}
\newcommand{\HP}{\mathbb{H} P}
\newcommand{\NUMS}{\nu_S^M}
\newcommand{\skmu}{\mathbb{S}^{k-1}}
\newcommand{\Id}{\mathrm{Id}}
\DeclareMathOperator{\id}{\mathrm{id}}
\DeclareMathOperator{\scal}{\mathrm{scal}}
\DeclareMathOperator{\tr}{\mathrm{tr}}
\newtheorem{lem}{Lemma}[section]
\newtheorem{thm}[lem]{Theorem}
\newtheorem{prop}[lem]{Proposition}
\newtheorem{cor}[lem]{Corollary}
\newcommand{\pref}[1]{Proposition~\ref{#1}}
\newcommand{\lref}[1]{Lemma~\ref{#1}}
\newcommand{\dref}[1]{Definition~\ref{#1}}
\newcommand{\tref}[1]{Theorem~\ref{#1}}
\newcommand{\cref}[1]{Corollary~\ref{#1}}
\newcommand{\sref}[1]{Section~\ref{#1}}
\theoremstyle{definition}
\newtheorem{rem}[lem]{Remark}
\newtheorem{quest}[lem]{Question}
\newtheorem{dfn}[lem]{Definition}
\newtheorem{exa}[lem]{Example}
\newtheorem{set}[lem]{Setting}
\begin{document}

\title{Scalar positive immersions}
\author{Luis A. Florit}
\address{IMPA, Est. Dona Castorina, 110, Jardim Botânico 22460-320, Rio de Janeiro, RJ, BRAZIL}
\email{luis@impa.br}
\urladdr{}
\author{Bernhard Hanke}
\address{Universit\"at Augsburg, Institut f\"ur Mathematik, 86135 Augsburg, Germany}
\email{hanke@math.uni-augsburg.de}
\thanks{The first named author has been supported by CNPq - Brazil}
\thanks{The second named author has been supported by the Special Priority Program SPP 2026 ``Geometry at Infinity'' funded by the DFG}
\begin{abstract}
As shown by Gromov-Lawson and Stolz the only obstruction to the existence of positive scalar curvature metrics on closed simply connected manifolds in dimensions at least five appears on spin manifolds and is given by the non-vanishing of the $\alpha$-genus of Hitchin.

When unobstructed we shall realize  a positive scalar curvature metric by an immersion into Euclidean space whose dimension is uniformly close to the classical Whitney upper bound for smooth immersions.
Our main tool is an extrinsic counterpart of the well-known Gromov-Lawson surgery procedure for constructing positive scalar curvature metrics.
\end{abstract}

\keywords{Manifolds of positive scalar curvature, scalar positive immersions, extrinsic surgery, Veronese embedding}

\subjclass[2010]{Primary: 53C42, 57R65, 53A07; Secondary: 53C23, 57Q60}

\maketitle

\section{Introduction}

One of the central results in positive scalar curvature geometry  \cites{gl,st} says that a closed simply connected manifold $M$ of dimension $n \geq 5$ admits a Riemannian metric of positive scalar curvature unless $M$ is spin and Hitchin's $\alpha$-genus $\alpha(M) \in \KO^{-n}$  is non-zero (see \cite{Hitchin}).
The purpose of our paper is to apply the ideas behind this result to  the classical problem of finding immersions into Euclidean space in low codimensions under certain curvature hypotheses.
We are interested here in positive scalar curvature.
In this work all manifolds and maps between manifolds are assumed to be smooth.

\begin{dfn}
We say that an immersion $f \colon M \to \R^N$ of some manifold $M$ is {\em scalar positive} if the Riemannian metric induced on $M$ by $f$ has positive scalar curvature.
\end{dfn}

The classical Nash isometric embedding theorem \cite{nash} implies that a closed (i.e., compact without boundary) Riemannian manifold $M$ of positive scalar curvature admits an  isometric, hence scalar positive, immersion into Euclidean space whose dimension depends quadratically on the dimension of $M$.
Our main result shows that, in the cases mentioned before, this dimension bound can be improved considerably if we do not restrict to a specific positive scalar curvature metric on $M$.

\begin{thm} \label{main}
Let $M$ be a closed simply connected manifold of dimension $n\geq 5$.
If $M$ is spin, assume further that $\alpha(M)=0$.
Then there exists a scalar positive immersion $M \to \R^{2n-1+\delta(n)}$ where
\[
 \delta(n) = \begin{cases}  \max\{0, 13 - \beta(n+6) \} \in \{ 0,\dots,12 \},  & \textrm{ if } M \text{ is spin}, \\
\max\{0,\, 9\, -\,\beta(n+4)\} \in \{ 0,\dots,8 \}, & \textrm{ if } M \text{ is not spin} .
 \end{cases}
\]
Here $\beta(m)$ denotes the number of digits $1$ in the dyadic expansion of $m \in \N$.
\end{thm}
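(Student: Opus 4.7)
The plan is to adapt the Gromov--Lawson/Stolz surgery program for positive scalar curvature to the extrinsic setting, combining a bordism-theoretic reduction with an extrinsic version of the Gromov--Lawson surgery lemma. I would first reduce $M$ via bordism to an explicit model manifold $M_0$: in the spin case, Stolz's theorem (applied with the hypothesis $\alpha(M) = 0$) places $M$ in the spin bordism ideal generated by total spaces of $\HP^2$-bundles, so $M$ is spin bordant to such an $M_0$; in the non-spin case, the Gromov--Lawson cobordism argument yields a bordism to a simpler model built from $\CP^2$-factors or spheres. Since $M$ is simply connected and $n \geq 5$, standard handle-trading arguments arrange this bordism to be realised by a sequence of surgeries of codimension at least $3$ in the relevant trace manifold, i.e.\ handle indices between $3$ and $n-2$.

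The model $M_0$ is then chosen so as to admit an explicit scalar positive immersion into $\R^{2n - 1 + \delta(n)}$. The Veronese embedding realises $\HP^2$ as an isometric submanifold of the trace-zero quaternionic Hermitian $3\times 3$ matrices $\R^{14}$ with its standard positively-curved symmetric metric, in codimension~$6$; analogously $\CP^2$ embeds via its Veronese map in $\R^8$, in codimension~$4$. Composing these with Cohen-type immersion bounds for the base of the relevant bundle, together with standard bundle-immersion constructions, one obtains a scalar positive immersion of $M_0$ into $\R^{2n-1+\delta(n)}$. The offsets $+6$ and $+4$ in the $\beta(n+\cdot)$ terms should reflect precisely the codimensions of the two Veronese embeddings used in the spin and the non-spin case, respectively; the constants $13$ and $9$ account for the fiber dimensions and the Cohen deficit.

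The main technical ingredient, and the principal obstacle, is an extrinsic Gromov--Lawson surgery lemma: given a scalar positive immersion $f\colon W \to \R^K$ and an embedded tube $S^p \times D^{q+1} \hookrightarrow W$ with $q \geq 2$, one should produce a scalar positive immersion of the surgered manifold $W'$ into the \emph{same} $\R^K$. The idea is to mimic the intrinsic Gromov--Lawson metric surgery extrinsically: one constructs a torpedo-like immersion of $D^{p+1} \times S^q$ in a Euclidean slab, with induced metric of positive scalar curvature (the sphere factor $S^q$ is shrunk, its curvature dominating thanks to $q \geq 2$), and glues it smoothly onto the complement of the tube by a careful interpolation of the embedding in the normal and tangent directions. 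The difficulty, absent in the intrinsic case, is that the warped metric must arise as the first fundamental form of an \emph{actual immersion} of prescribed codimension, so one needs an explicit geometric realisation of the Gromov--Lawson bending (presumably rotating the $S^q$-factor into the normal directions while shrinking it) together with sharp control on the induced scalar curvature throughout the interpolation. Once this lemma is in place, iterating it along the surgery sequence from the bordism reduction, starting from the model immersion of $M_0$, produces the claimed scalar positive immersion of $M$ into $\R^{2n - 1 + \delta(n)}$.
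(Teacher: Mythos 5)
Your proposal follows the paper's strategy exactly: a bordism reduction to total spaces of $\HP^2$-bundles (Kreck--Stolz, in the spin case) or $\CP^2$-bundles (F\"uhring, in the non-spin case), a scalar positive immersion of the model total space via the Veronese embedding combined with Cohen's immersion theorem and normal bundle scaling, and then iterated extrinsic surgery in codimensions at least $3$ using Smale's handle cancellation to arrange the surgery sequence. One caveat on your sketch of the extrinsic surgery lemma itself: in the ambient-flat setting the bending profile cannot simply start tangent to the given immersion as in the intrinsic Gromov--Lawson argument, and the paper handles this ``initial stage'' by first deforming the immersion into a \emph{normally spherical} shape near the surgery sphere using the local flexibility lemma of B\"ar--Hanke --- a step your sketch does not anticipate and which is in fact the principal new difficulty compared to the intrinsic case.
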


Recall that $2n-1$ is Whitney's classical upper dimension bound for immersions of $n$-manifolds (for $n \geq 2$)  into Euclidean space.
The dimension bound for scalar positive immersions in \tref{main} increases the Whitney bound by at most twelve, and it is in fact equal to the Whitney bound in most dimensions.
However, the Whitney bound  is in general not sufficient for realizing scalar positive immersions.
Indeed, as we will see in \sref{nomsph}, the normal bundle of such an immersion $M \to \R^N$ splits off the line spanned by the nowhere vanishing mean curvature field.
Hence, by \cite{Hirsch}*{Theorem 6.4}, the manifold $M$ actually immerses into $\R^{N-1}$ if $\dim M < N-1$ where for non-compact connected $M$ the assumption $\dim M < N-1$ can be dropped by \cite{Hirsch2}*{Theorem 4.7.}.
This observation is illustrated by the following example.

\begin{exa}\label{cpn1} On the one hand, according to \tref{main}, for $m \geq 3$ the complex projective space $\CP^{m}$ admits a scalar positive immersion into $\R^{4m+11}$.
For $m \geq 5$ this improves the embedding dimension $m^2 + 2m$ of the isometric Veronese embedding of $\CP^m$ with the Fubini-Study metric, which is of positive scalar curvature.
On the other hand, for $\ell \geq 1$ and $m = 2^{\ell}$ the manifold $\CP^m$ does not immerse into $\R^{4m-2}$ by \cite{sand}*{Theorem 4}.
Hence it does not admit a scalar positive immersion into $\R^{4m-1}$, and the Whitney bound is not sufficient for realizing a scalar positive immersion.
\end{exa}

These considerations lead us to the following interesting open problem.

\begin{quest} \label{openquest} Let $M$ be a closed manifold admitting both a positive scalar curvature metric and an immersion $M \to \R^N$.
Does $M$ admit a scalar positive immersion $M \to \R^{N+1}$?
\end{quest}

\begin{rem} \label{remquest}
The corresponding question for non-compact connected manifolds has an affirmative answer due to Gromov's $h$-principle; see \pref{hprinc} below.
In combination with \cite{Hirsch2}*{Theorem 4.7.},  this implies that a   non-compact connected parallelizable manifold $M$ of dimension $n \geq 2$ admits a scalar positive  immersion into $\R^{n+1}$, but clearly not into $\R^n$.
\end{rem}

The main ingredient of our proof of \tref{main} is the following extrinsic version of the surgery result proved independently by Gromov-Lawson \cite{gl} and Schoen-Yau \cites{SY}.

\begin{thm}\label{extr_surg}
Let $f\colon M \to \R^N$ be a scalar positive immersion with $n = \dim M$.
Assume that $\hat M$ is obtained from $M$ by a surgery along an embedded sphere $S^d \subset M$ of codimension $n - d \geq 3$.
If $N\geq n+d+2$, then there exists a scalar positive immersion $\hat f \colon \hat M \to \R^N$.
Furthermore, the immersion $\hat f$ may be assumed to coincide with $f$ outside an arbitrarily small neighborhood of $S^d$ in $M$.
\end{thm}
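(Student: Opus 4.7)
The plan is to realize the intrinsic Gromov-Lawson surgery construction \cite{gl} extrinsically via explicit immersions. The key observation is that the standard Gromov-Lawson model metric on $D^{d+1} \times S^{n-d-1}$, a doubly warped product $ds^2 + F(s)^2 g_{S^d} + H(s)^2 g_{S^{n-d-1}}$, is realized as the induced metric of the surface-of-revolution immersion
\[
(\theta,\phi,s) \;\mapsto\; F(s)\, \iota(\theta) + H(s)\, \phi + Z(s)\, e \;\in\; V \oplus W \oplus \R e \;\subset\; \R^N,
\]
provided $(F,H,Z)$ is parametrized by arc length. Here $\iota : S^d \hookrightarrow V = \R^{d+1}$ is the standard round embedding, $W = \R^{n-d}$ is placed orthogonally to $V$, and $e \in (V \oplus W)^\perp$ is a unit vector. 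This extrinsic model lives in an $(n+2)$-dimensional subspace and therefore only requires $N \geq n+2$ on its own.

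The full strength of the hypothesis $N \geq n + d + 2$ enters through the main step: reducing $f$ to normal form in a tubular neighborhood of $S^d$. I would prove that, after an isotopy of $f$ through immersions supported in an arbitrarily small neighborhood of $S^d$ (which can be arranged to preserve positive scalar curvature by the openness of this condition together with running the isotopy close to the identity in that small neighborhood), $f$ may be assumed to coincide on some tube $S^d \times D^{n-d}(r_0)$ with the standard inclusion $(\theta,v) \mapsto \iota(\theta) + v$ into $V \oplus W$. This reduction has two sub-steps: unknotting the embedded image $f(S^d) \subset \R^N$ to the standard round sphere in $V$ (possible by Haefliger's theorem, since $N \geq 2d+3$ follows from $N \geq n+d+2$ and $n-d \geq 3$); and isotoping the induced normal framing of $S^d$ in $\R^N$ to the constant framing into $W \subset V^\perp$. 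The obstructions to the latter live in homotopy groups of the Stiefel manifold $V_{n-d}(\R^{N-d-1})$ of $(n-d)$-frames in $V^\perp$, which is $(N-n-2)$-connected. The condition $N \geq n + d + 2$ is precisely $d$-connectivity, ensuring vanishing of $\pi_d$ and the existence of the required isotopy.

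With $f$ in normal form, the new immersion $\hat f$ is defined to agree with $f$ outside the tube and, on the surgery replacement piece $D^{d+1} \times S^{n-d-1}$, to equal the surface-of-revolution immersion displayed above. The profile curve $(F, H, Z)\colon [0,L] \to \R^3_{\geq 0}$ is chosen of Gromov-Lawson type: arc-length parametrized, equal to $(1, r_0, 0)$ with unit tangent $(0,-1,0)$ at $s = 0$ (matching the old tube inward along its boundary), equal to $(0, y_1, Z_L)$ with unit tangent $(-1,0,0)$ at $s = L$ for small $y_1 > 0$ (yielding a smooth collapse of the $S^d$ factor that caps off the $D^{d+1}$ piece), and bending smoothly with controlled curvature in between. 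The prescribed boundary behavior guarantees $\hat f$ is smooth across the gluing boundary and at the cap.

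Since the induced metric of $\hat f$ is, by construction, exactly the Gromov-Lawson doubly warped product $ds^2 + F^2 g_{S^d} + H^2 g_{S^{n-d-1}}$, the scalar curvature computation of \cite{gl}*{Lemma 1} applies verbatim: positivity of scalar curvature follows from the codimension hypothesis $n - d - 1 \geq 2$ upon choosing $y_1$ sufficiently small and keeping the curvature of the profile under control. I expect the main obstacle of the proof to lie in the first step, and specifically in the framing isotopy of the normal-form reduction: this is where the full codimension hypothesis $N \geq n + d + 2$ is essential, and it must be executed carefully so as to remain within scalar positive immersions throughout.
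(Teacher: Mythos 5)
The decisive gap is your normal-form reduction. You assert that, by an isotopy supported in a small neighborhood of $S^d$ and ``preserving positive scalar curvature by openness,'' $f$ can be deformed so that on a tube it equals the standard flat inclusion $(\theta,v)\mapsto\iota(\theta)+v$. Openness of the scalar positivity condition only protects $C^2$-small perturbations; Haefliger unknotting of $f(S^d)$ plus straightening of the tube and of the framing is a large deformation that changes the induced metric completely, and nothing guarantees that the path of immersions -- or even its endpoint -- remains scalar positive. Indeed the endpoint cannot be reached in general: the standard flat tube induces the product metric $g_{S^d}\oplus g_{\mathrm{eucl}}$, whose scalar curvature is $d(d-1)$, hence zero for $d\le 1$, while the theorem must cover all $0\le d\le n-3$. (Note also that $f|_{S^d}$ is a priori only an immersion, not an embedding, so ``unknotting $f(S^d)$'' needs a preliminary step.) This reduction is exactly where the paper works hardest, and it proceeds differently: $f|_S$ is never straightened. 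Instead $f$ is deformed near $S$, through scalar positive immersions, into a \emph{normally spherical} shape curving in the direction of the mean curvature field $\xi$ (Proposition~\ref{local_deform}), via explicit second fundamental form estimates (Lemmas~\ref{comp_first_deform}, \ref{affine_comb}) globalized by the local flexibility lemma \cite{bh}; the handle is then attached along an immersed (not embedded, not standard) disc extending $f|_S$, supplied by Smale's theorem, with the normal frame extended over the disc using the $(N-n-2)$-connectivity of the Stiefel manifold $V_{n-d}(\R^{N-d-1})$ -- this last point is the one place where your identification of the role of $N\ge n+d+2$ agrees with the paper.

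Relatedly, the claim that the Gromov--Lawson computation applies ``verbatim'' to your revolution model overlooks the difference between their ambient $M\times\R$ and flat $\R^N$. In your model the initial stage has $F\equiv 1$, $H'=-1$, so the normal-sphere term proportional to $(1-H'^2)/H^2$ vanishes and the scalar curvature is exactly $d(d-1)$; starting to bend ($H''>0$) then makes it negative for $d\le 1$, with nothing to compensate, whereas in \cite{gl} the ambient positive scalar curvature of $M$ and the curvature of small geodesic normal spheres carry the estimate through this stage. This is precisely the phenomenon isolated in Remark~\ref{compareGL}: in the extrinsic flat setting the profile must never become ``straight'' ($\sigma=0$), which is why the paper replaces the initial stage by the local deformation of Proposition~\ref{local_deform} and uses bending profiles with $\kappa=-\lambda\sigma$ and $\sigma\ge\sigma_0>0$ throughout (Propositions~\ref{main_est} and \ref{bending_curve}). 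Your surface-of-revolution idea is the right analogue of the paper's $F_\gamma$, but without a flat normal form it must be carried out around the varying axis $\xi(q)$ based at $f(q)$, and its scalar positivity then requires estimates of the type proved in Section~\ref{bending}, not a verbatim appeal to \cite{gl}.
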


Our paper is organized as follows.
In \sref{bundlescaling} we construct scalar positive immersions of total spaces of fibre bundles whose fibres are equipped with positive scalar curvature metrics.
This  uses a variation of the well-known fibrewise shrinking process in Riemannian submersions with scalar positive fibres.
Example \ref{Veronese} provides scalar positive immersions of total spaces of $\CP^2$-bundles and $\HP^2$-bundles  from which the scalar positive immersions  in Theorem \ref{main} will ultimately be constructed by extrinsic surgeries in codimensions at least $3$.
In Sections \ref{nomsph} and \ref{bending}, which form the technical core of our paper, we study the two types of local deformations near closed embedded submanifolds that are required for the extrinsic surgery  process in \tref{extr_surg}.
At first, we use the local flexibility lemma proved by B\"ar and the second named author \cite{bh} to bring a given scalar positive immersion into a particularly convenient form around  a  submanifold; see \pref{local_deform}.
Then, in \pref{bending_curve} we construct the appropriate bending profiles required for the extrinsic surgery.
After these preparations the proofs of  Theorems \ref{extr_surg} and \ref{main}  are  completed in  \sref{extrsurg}

\medskip

\textit{Acknowledgments:}
The second named author is grateful to IMPA, Rio de Janeiro, and to the Courant Center (NYU), New York, for their hospitality when parts of this research were carried out.
Also he wishes to thank Misha Gromov for a number of stimulating remarks.

\section{Scalar positive immersions via normal bundle scaling} \label{bundlescaling}

In this section we obtain scalar positive immersions of total spaces of fibre bundles from which  the manifolds $M$  in \tref{main} can be obtained by extrinsic surgeries in codimensions at least $3$.

\medskip

Let us begin by establishing the basic setup.

\begin{set} \label{assumption} Let $B$ be a compact $\ell$-dimensional manifold, possibly with boundary, and let $E \to B$ be a Euclidean vector bundle of rank $m$.
Furthermore, let $X \subset \R^m$ be a closed (i.e., compact embedded without boundary) submanifold and $V \to B$ be a sub-fibre bundle of $E \to B$ with fibre $X$ such that around each point in $B$ there exists an orthogonal vector bundle trivialization $\Psi \colon E|_U \stackrel{\cong}{\to} U \times \R^m$ satisfying
\begin{equation} \label{subbundle}
 \Psi( V \cap E|_U) = U \times X \subset U \times \R^m \, .
\end{equation}
\end{set}
\noindent In particular, the structure group of $V \to B$ reduces to the isometry group of the induced Riemannian metric on $X \subset \R^m$, which we denote by $h$.

\begin{dfn} \label{def:compatible} Let $g$ be a Riemannian metric on an open neighborhood of the zero section $B = B \times 0 \subset E$.
We say that $g$ is {\em compatible} with the Euclidean structure of $E \to B$ if there is an orthogonal decomposition
\begin{equation} \label{orthbas}
 g|_{B \times 0} = g_{B} \oplus \langle \,\cdot\,, \cdot \,\rangle_{E}
\end{equation}
where $g_B$ is the metric on $B$ induced by $g$, and $\langle\, \cdot\,, \cdot \,\rangle_{E}$ is the given bundle metric on $E$ considered as a subbundle of $ TE|_B$.
\end{dfn}

For $\lambda > 0$ we denote by $\lambda V \subset E$ the image of the fibrewise dilation of $V$ by the factor $\lambda$.

\begin{prop}\label{tub_imm} Let $g$ be a Riemannian metric on $E$ which is compatible with the Euclidean structure of $E$. If $\scal_h > 0$, then there exists $\lambda_0 >0 $ such that  for all $0 < \lambda \leq \lambda_0$ the induced metric on $\lambda V \subset (E,g)$ has positive scalar curvature.
\end{prop}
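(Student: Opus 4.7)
The plan is to use a variant of the classical fibrewise shrinking technique for Riemannian submersions with scalar-positive fibres. The main idea is that as $\lambda \to 0$, each fibre $\lambda X_b \subset (E_b, g|_{E_b})$ of $\lambda V$ acquires intrinsic scalar curvature of order $\lambda^{-2}\scal_h$, blowing up to $+\infty$ uniformly on the compact $X$, while all other contributions to $\scal(g|_{\lambda V})$ remain bounded. By compactness of $B$, I will cover it by finitely many trivialization neighborhoods as in \eqref{subbundle}, so that it suffices to establish the required bound uniformly on a single such neighborhood and then take the minimum of the resulting $\lambda_0$'s.

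Fix such a trivialization $\Psi \colon E|_U \cong U \times \R^m$ with coordinates $(u, y)$ and $V \cap E|_U = U \times X$. The compatibility condition \eqref{orthbas} yields the Taylor expansion
\[
g|_{(u, y)} = g_B(u) \oplus \langle \cdot, \cdot \rangle_{\R^m} + O(|y|).
\]
I will use the fibrewise dilation $\sigma_\lambda(u, y) = (u, \lambda y)$ to identify $\lambda V$ with $V$. Setting $g_\lambda := \sigma_\lambda^*(g|_{\lambda V})$ and substituting $y = \lambda x$ with $x \in X$, the pullback takes the block form (relative to the splitting $du \oplus dx$)
\[
g_\lambda = \begin{pmatrix} g_B(u) + O(\lambda) & O(\lambda^2) \\ O(\lambda^2) & \lambda^2 h(x) + O(\lambda^3) \end{pmatrix},
\]
where $h$ is the metric on $X$ induced from $\R^m$ and each $O(\lambda^k)$ term is a smooth tensor bounded by $C\lambda^k$ in $C^2$ on the compact set $U \times X$.

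Next, I would decompose $g_\lambda = \tilde g_\lambda + r_\lambda$ with leading-order product metric $\tilde g_\lambda := g_B \oplus \lambda^2 h$, whose scalar curvature is $\scal_{g_B} + \lambda^{-2}\scal_h \geq c\lambda^{-2} - C$ for $c := \inf_X \scal_h > 0$. A direct computation of the Christoffel symbols, Ricci tensor, and scalar curvature in block form — using that the inverse metric $g_\lambda^{-1}$ has horizontal block of order $1$ and vertical block of order $\lambda^{-2}$ — should give
\[
\scal(g_\lambda) - \scal(\tilde g_\lambda) = O(\lambda^{-1})
\]
uniformly on $U \times X$. Hence $\scal(g_\lambda) \geq c\lambda^{-2} - C'\lambda^{-1} - C''$, which is positive for $\lambda$ sufficiently small. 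Since $(V, g_\lambda)$ is isometric to $(\lambda V, g|_{\lambda V})$ via $\sigma_\lambda$, this yields the desired conclusion.

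The principal obstacle will be the bookkeeping in the scalar curvature expansion: I need to verify that no $O(\lambda^{-2})$ correction appears in $\scal(g_\lambda) - \scal(\tilde g_\lambda)$ that could cancel the dominant fibre term $\lambda^{-2}\scal_h$. Since the three blocks of $r_\lambda$ vanish at orders $\lambda$, $\lambda^2$, $\lambda^3$ respectively, a careful block-by-block count shows that, even after contraction with the $O(\lambda^{-2})$ vertical block of $g_\lambda^{-1}$ and two coordinate derivatives, these remainders contribute at most at order $\lambda^{-1}$ to $\scal(g_\lambda)$. The only potentially dangerous terms come from $\partial_x$-derivatives of the horizontal block perturbation (which produce $\Gamma^v_{hh} = O(\lambda^{-1})$); however, inspecting how these enter the $\Gamma\cdot\Gamma$ and $\partial\Gamma$ pieces of the Ricci tensor shows they always combine with bounded factors, never producing the feared $\lambda^{-2}$ cancellation.
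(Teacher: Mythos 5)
Your strategy is, in spirit, exactly the paper's: shrink the fibres, compare with the product metric $g_B \oplus \lambda^2 h$, and work uniformly over a finite cover of the compact base; the block form of $\sigma_\lambda^*g$ that you derive from the compatibility condition is also correct. The gap is that the entire analytic content of the proposition --- that $\scal(g_\lambda)-\scal(\tilde g_\lambda)$ is $O(\lambda^{-1})$ rather than $O(\lambda^{-2})$ --- is carried by a ``direct computation \dots should give'' and a two-sentence heuristic, and the heuristic as stated glosses over real difficulties. For instance, the mixed block of $g_\lambda^{-1}$ is only $O(1)$, not small: it equals $-A^{-1}B\,(D-B^{T}A^{-1}B)^{-1}+\dots$ with $B=O(\lambda^2)$ and $(D-\dots)^{-1}=O(\lambda^{-2})$, so terms involving it cannot be discarded on order grounds alone, whereas for the product metric this block vanishes identically. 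Likewise, $\partial_x$-derivatives of the remainders gain a power of $\lambda$ but $\partial_u$-derivatives do not, so the order of each Christoffel symbol depends on which index is differentiated. None of this ultimately produces a $\lambda^{-2}$ contribution, but verifying that requires actually writing out $\scal$ in block form, which you have not done; as written, the proof stops precisely at the step you yourself call the principal obstacle.

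The paper closes this gap without any Christoffel computation by rescaling the base coordinates as well: pulling back $\frac{1}{\lambda^2}g$ under $(x,y)\mapsto\Phi^{-1}(\lambda x,\lambda y)$ yields a metric $g_\lambda$ on $\frac{1}{\lambda}\phi(U)\times\R^m$ whose coefficients are $g_{ij}(\lambda x^1,\dots,\lambda x^n)$, and the chain rule immediately gives $\|g_\lambda-\tilde g_\lambda\|_{C^2}\to 0$, where $\tilde g_\lambda$ is isometric to $\frac{1}{\lambda^2}g_B\oplus g_{\mathrm{eucl}}$. The metric induced on the fixed submanifold $\frac{1}{\lambda}\phi(U)\times X$ therefore converges in $C^2$ to the product $\frac{1}{\lambda^2}g_B\oplus h$, whose scalar curvature tends uniformly to $\scal_h>0$, and positivity for small $\lambda$ follows from continuity of the induced scalar curvature in the ambient $C^2$-topology. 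I recommend you either adopt this rescaling --- it converts your unproved $O(\lambda^{-1})$ bookkeeping into a one-line chain-rule estimate --- or carry out the block computation in full.
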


\begin{proof}
Without loss of generality we can assume that $X \subset \R^m$ is contained in the closed unit ball $D_1^m \subset \R^m$.
Consider a compact subset $K \subset B$ contained in an open subset $K \subset U \subset B$ which admits an orthogonal vector  bundle trivialization $\Psi \colon E|_U \stackrel{\cong}{\to} U \times \R^m$ satisfying \eqref{subbundle} and a local manifold chart $\phi \colon U \to \phi(U) \subset \R^\ell$.
Setting $n = \ell + m$, we obtain a manifold chart
\[
 \Phi \colon E|_U \stackrel{\Psi}{\cong} U \times \R^m \stackrel{\phi \times \id}{\approx} \phi(U) \times \R^m \subset \R^{n} \, .
\]
Fix standard coordinates $(x^1, \ldots, x^{\ell})$ and $(x^{\ell+1}, \ldots, x^n)$ on $\R^\ell$ and $\R^m$.
With respect to the local manifold chart $\Phi$ the metric $g$ has smooth components $g_{ij}=g\left(\frac{\partial}{\partial x^i} ,\frac{\partial}{\partial x^j}  \right) \colon \phi(U) \times \R^m \to \R$ for $1 \leq i,j \leq n$.

For $\lambda >0 $ we now consider the metrics $g_\lambda$ and $\tilde g_\lambda$ on $\frac{1}{\lambda} \phi(U) \times \R^m$ given by
\begin{equation} \label{scaleit}
 g_{\lambda} (x) := \sum_{i,j} g_{ij} ( \lambda x^1, \ldots, \lambda x^n ) \, dx^i dx^j  \, , \quad \tilde g_{\lambda} (x) := \sum_{i,j} g_{ij} ( \lambda x^1, \ldots, \lambda x^\ell , 0, \ldots, 0 )\, dx^i dx^j \, .
\end{equation}
We have an isometry
$$
\left( \frac{1}{\lambda} \phi(U) \times \R^m, g_\lambda \right) \stackrel{\alpha}{\approx} \left( E|_U , \frac{1}{\lambda^2} \, g \right), \qquad \alpha(x,y) := \Phi^{-1}(\lambda x, \lambda y).
$$
By \eqref{orthbas} and since $\Psi$ is an orthogonal bundle trivialization, we furthermore obtain an isometry
$$
 \left(\frac{1}{\lambda} \phi(U) \times \R^m, \tilde g_\lambda\right) \stackrel{\beta}{\approx} \left(U \times \R^m, \frac{1}{\lambda^2}\, g_B \oplus g_{\rm eucl.} \right), \qquad \beta(x,y) := \big( \phi^{-1} ( \lambda x) , y \big).
$$
Each mixed partial derivative of $g_{ij}\colon \phi(U) \times \R^m \to \R$ for  $1 \leq i,j \leq n$ is uniformly norm bounded over the compact set $\phi(K) \times D_1^m$ and hence the chain rule shows that
\begin{eqnarray} \label{limscal}
 \lim_{\lambda \to 0} \| g_{\lambda} - \tilde g_\lambda\|_{C_\lambda^2} = 0
\end{eqnarray}
where $C_\lambda^2$ denotes the  maximum $C^2$-norm over $\frac{1}{\lambda} \phi(K) \times D_1^m$ of smooth sections of $T^* V_\lambda \otimes T^* V_\lambda \to V_\lambda$  for $V_{\lambda} := \frac{1}{\lambda} \phi(U) \times \R^m$ and with respect to the frame $dx^i dx^j$.
Using that
\[
 \lim_{\lambda \to 0} \| \scal_{ \frac{1}{\lambda^2} g_B } \|_{C^0(B) } = 0,
\]
the compactness of $X$ and $\scal_h >0$, we conclude with \eqref{limscal} and the isometry $\beta$ that there exists $0 < \lambda_0 \leq 1$ such that for all $0 < \lambda \leq \lambda_0$ the metric $g_{\lambda}$ induces a metric on $\frac{1}{\lambda} \phi(U) \times X$ which is of positive scalar curvature on $\frac{1}{\lambda} \phi(K) \times X$.
Using the isometry $\alpha$ this shows that for all $0 < \lambda \leq \lambda_0$ the metric $\frac{1}{\lambda^2}g$, and hence also $g$, induce metrics on $\lambda V$ which are of positive scalar curvature on $\lambda V|_K$.

Since the compact manifold $B$ can be covered by finitely many such compact subsets $K$,  the assertion of Proposition \ref{tub_imm} follows.
\end{proof}

Given an immersion $f\colon M\to \R^N$, we denote by $\nu_f$ its normal bundle, whereas the normal bundle of an embedded submanifold $S\subset M$ will be denoted  $\NUMS$.

\begin{exa} Let $M$ be a Riemannian manifold, let $S \subset M$ be a closed submanifold of codimension at least $3$ and let $\rho_0 > 0$ such that the normal exponential map $\exp^{\perp} \colon \NUMS \to M$ restricts to a diffeomorphism $\{ |\eta| < \rho_0\} \approx U_{\rho_0}(S)$ of the open $\rho_0$-disc bundle in $\NUMS$ to the open $\rho_0$-neighborhood of $S$ in $M$.
Since $\exp^{\perp}$ induces a metric on $\{ |\eta| < \rho_0\} \subset \NUMS$ which is compatible with the Euclidean structure of $\NUMS \to S$ in the sense of  \dref{def:compatible}, \pref{tub_imm} implies that there exists $0 < \rho < \rho_0$ such that for all $0 < \rho' \leq \rho$ the induced metric on the normal spherical $\rho'$-tube $\exp^{\perp}(\{|\eta|=\rho'\})\subset M$ around $S$ has positive scalar curvature.

This statement also appears at the beginning of the proof of \cite{gl}*{Lemma 2} for trivial $\NUMS$.
Our proof of \pref{tub_imm} elaborates on the argument there.
\end{exa}

We will now apply \pref{tub_imm} to construct scalar-positive immersions of certain sub-fibre bundles of Euclidean vector bundles.

\begin{lem} \label{isom} Let $E_1, E_2 \to B$ be Euclidean vector bundles and let $\psi \colon E_1 \to E_2$ be an injective vector bundle homomorphism.
Then $\psi$ can be deformed through injective vector bundle homomorphisms into a fibrewise isometric vector bundle homomorphism $\psi'\colon E_1 \to E_2$.
\end{lem}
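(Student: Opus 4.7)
The plan is to use a fibrewise polar decomposition of $\psi$. For each $b \in B$ the injective linear map $\psi_b \colon (E_1)_b \to (E_2)_b$ admits an adjoint $\psi_b^* \colon (E_2)_b \to (E_1)_b$ with respect to the given Euclidean structures, and injectivity of $\psi_b$ makes $\psi_b^* \psi_b \in \End((E_1)_b)$ symmetric and positive definite. First I would form the bundle endomorphism $P := \sqrt{\psi^* \psi} \in \End(E_1)$ by fibrewise functional calculus. Since the square root is real-analytic on the open cone of positive definite symmetric operators, and $\psi^* \psi$ varies smoothly in $b$ with values in this cone, $P$ is a smooth and pointwise invertible bundle endomorphism of $E_1$. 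Setting $U := \psi \circ P^{-1} \colon E_1 \to E_2$ then yields a smooth, fibrewise isometric bundle homomorphism with $\psi = U \circ P$.

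The second step is the explicit straight-line interpolation
\[
 \psi_t \;:=\; U \circ \bigl( (1-t)\, P + t\, \Id_{E_1} \bigr), \qquad t \in [0,1],
\]
which joins $\psi_0 = \psi$ to $\psi_1 = U =: \psi'$. For every $t \in [0,1]$ and every $b \in B$, the factor $(1-t)P_b + t\,\Id$ is a convex combination of two positive definite symmetric operators on $(E_1)_b$, hence itself positive definite and in particular invertible; composed with the fibrewise isometric (hence injective) $U_b$, this makes $\psi_t$ an injective bundle homomorphism for every $t$. Thus $\{\psi_t\}_{t \in [0,1]}$ is the required deformation of $\psi$ through injective vector bundle homomorphisms to the fibrewise isometric $\psi'$.

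No step here is really an obstacle; the only point that needs to be checked carefully is the smooth (in $b$) dependence of $P$ and $U$, and this follows from the smoothness of the square root and of inversion on the open subbundle of positive definite symmetric endomorphisms of $E_1$, together with the fact that $\psi^* \psi$ takes values in that open subbundle.
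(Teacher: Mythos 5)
Your proof is correct, but it takes a genuinely different route from the paper's. The paper works with the matrix spaces $\Inj(r_1,r_2)$ and $\Iso(r_1,r_2)$, observes that the inclusion $\Iso(r_1,r_2)\subset\Inj(r_1,r_2)$ is a strong deformation retract via Gram--Schmidt, and then globalizes the deformation over $B$ by obstruction theory on a cellular decomposition. Obstruction theory is needed there because Gram--Schmidt, which orthogonalizes the columns of a matrix in a fixed order, is \emph{not} equivariant under $\OO(r_1)$ acting on the right, so the fibrewise retraction is not canonical and does not automatically patch over the bundle. Your polar decomposition $\psi = U\circ P$ with $P=\sqrt{\psi^*\psi}$ sidesteps this: it is manifestly $\OO(r_1)\times\OO(r_2)$-equivariant (replacing $\psi$ by $R_2\,\psi\,R_1^{-1}$ sends $P$ to $R_1PR_1^{-1}$ and $U$ to $R_2UR_1^{-1}$), so the construction is well-defined fibrewise over all of $B$ with no gluing issues, and the straight-line homotopy through positive definite endomorphisms $(1-t)P+t\,\Id$ does the rest. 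The payoff is an explicit, canonical deformation that avoids obstruction theory entirely; the price is needing the smooth dependence of the square root on a positive definite operator, which you correctly note follows from real-analyticity of the functional calculus on the open cone of positive definite symmetric operators.
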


\begin{proof}
Let $r_1 \leq r_2$ be the ranks of $E_1$ and $E_2$, let $\Inj(r_1, r_2) \subset \R^{r_2 \times r_1}$ denote the space of matrices of maximal possible rank $r_1$ and let $\Iso (r_1, r_2) \subset \Inj (r_1, r_2)$ denote the subspace of matrices whose columns form an orthonormal family of vectors in $\R^{r_2}$.
The inclusion $\Iso(r_1, r_2) \subset \Inj (r_1 , r_2)$ is a strong deformation retract by the Gram-Schmidt process.
Hence the required deformation can be constructed inductively over a cellular decomposition of $B$ by standard obstruction theory.
\end{proof}

\begin{prop}[Normal bundle scaling] \label{postub}  In Setting \ref{assumption} suppose furthermore that $\scal_h > 0$ and that there exists an immersion $F \colon E \to \R^N$.
Then there exists a scalar positive immersion $f\colon V \to \R^N$.
\end{prop}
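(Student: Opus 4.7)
The strategy is to modify the given immersion $F$ in a neighborhood of the zero section so that the induced metric there becomes compatible with the Euclidean structure of $E$ in the sense of \dref{def:compatible}, and then to apply \pref{tub_imm} to obtain the desired scalar positive immersion on a sufficiently shrunken copy of $V$.

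First I would set $\iota := F|_B \colon B \to \R^N$ and consider its normal bundle $\nu_\iota \subset \iota^{*} T\R^N = B \times \R^N$, which has rank $N - \ell \geq m$ since $F$ is an immersion. The differential of $F$ along the vertical subbundle at the zero section, followed by the orthogonal projection onto $\nu_\iota$, defines a vector bundle homomorphism $\psi \colon E \to \nu_\iota$. This $\psi$ is injective: if $\psi(v) = 0$, then $dF(v) \in d\iota(TB)$, and since $dF$ is injective on $TE|_B = TB \oplus E$, this forces $v = 0$. By \lref{isom} the homomorphism $\psi$ deforms through injective homomorphisms to a fibrewise isometric vector bundle homomorphism $\psi' \colon E \to \nu_\iota$.

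Next I would define $G \colon E \to \R^N$ by $G(b,v) := \iota(b) + \psi'(v)$, viewing $\psi'(v)$ as an element of $\R^N$ via the inclusion $\nu_\iota \subset B \times \R^N$. At each point of the zero section the differential $dG$ sends $T_b B$ onto $d\iota(T_b B)$ and $E_b$ isometrically into $\nu_\iota$, two orthogonal subspaces of $\R^N$, so $dG$ is injective along $B$. Compactness of $B$ then gives some $\rho > 0$ such that $G$ is an immersion on the open disc bundle $\{|v|<\rho\} \subset E$. By construction, the orthogonality of $d\iota(TB)$ and $\nu_\iota$ combined with $\psi'$ being a fibrewise isometry shows that the induced metric $g := G^*\langle\cdot,\cdot\rangle_{\R^N}$ satisfies $g|_B = g_B \oplus \langle\cdot,\cdot\rangle_E$, i.e.\ it is compatible with the Euclidean structure of $E$.

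Finally, I would invoke \pref{tub_imm} to obtain $\lambda_0 > 0$ such that for every $0 < \lambda \leq \lambda_0$ the induced metric on $\lambda V \subset (E,g)$ has positive scalar curvature; choosing $\lambda$ so small that in addition $\lambda V \subset \{|v|<\rho\}$, the restriction $G|_{\lambda V} \colon \lambda V \to \R^N$ is a scalar positive immersion, and precomposing with the fibrewise dilation $V \stackrel{\cong}{\to} \lambda V$ yields the desired $f \colon V \to \R^N$. The only genuine technical point is arranging the compatibility of the induced metric along $B$, which is precisely what the deformation via \lref{isom} accomplishes; once $\psi'$ is in hand, the rest is standard use of compactness and the already established \pref{tub_imm}.
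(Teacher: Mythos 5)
Your proposal is correct and follows essentially the same route as the paper: restrict $F$ to the zero section, project $dF$ on the vertical bundle to the normal bundle of that restriction, use \lref{isom} to straighten the resulting injection into a fibrewise isometry $\psi'$, observe that $G(b,v)=\iota(b)+\psi'(v)$ (the paper's $\chi\circ\psi'$) is an immersion near $B$ inducing a metric compatible with the Euclidean structure, and finish with \pref{tub_imm}. The only cosmetic difference is that the paper factors the last map through a tubular-neighborhood map $\chi$ on $\nu_\phi$, while you absorb this into a single map $G$ on $E$; the justifications for the injectivity of $\psi$ and the orthogonal splitting along $B$ that you spell out are exactly the (implicit) ones in the paper.
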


\begin{proof}
Consider the immersion  $\phi:=F|_B \colon B \to \R^N$ and let $\tau\colon B \times \R^N \to \nu_\phi$ be the fibrewise orthogonal projection onto the  normal bundle of $\phi$, considered as a subbundle of the trivial bundle $B \times \R^N \to B$.
Since $F$ is an immersion we obtain an injective vector bundle homomorphism $\psi\colon E \to \nu_\phi$ which over $q \in B$ is given by $\psi_q \colon E_q \stackrel{d_q F}{\longrightarrow} \R^N \stackrel{\tau_q}{\longrightarrow} (\nu_\phi)_q$.
By \lref{isom} we can deform $\psi$ into a fibrewise isometric vector bundle homomorphism $\psi' \colon E \to \nu_\phi$.

Next choose $\rho_0 > 0$ such that $\chi \colon \nu_\phi \to \R^N$, $\chi(q, \zeta) := \phi(q) + \zeta$, restricts to an immersion $\{|\eta| < \rho_0\} \to \R^N$.
Since  $\psi'$ is fibrewise isometric the metric on $\{ | \eta| < \rho_0 \} \subset E$ induced by $\chi \circ \psi'$  is compatible with the Euclidean structure on $E \to B$.

By \pref{tub_imm} we find $\lambda > 0$ with $\lambda V \subset \{ |\eta| < \rho_0 \} \subset E$ and such that the composition
$
f \colon V \stackrel{\lambda\,\cdot}{\longrightarrow} \lambda V \stackrel{ \chi \circ \psi'}{\longrightarrow} \R^N
$
is a scalar positive immersion.
\end{proof}

\begin{exa} \label{Veronese} Let $B$ be a closed $\ell$-dimensional manifold and let $V \to B$ be a fibre bundle with fibre $X = \CP^2$ and structure group $G = \UU(3) \rtimes \ZZ/2$ where $\ZZ/2$ acts by complex conjugation on $\UU(3)$ in the semidirect product.
As usual the group action of $G$ on $\CP^2$ is induced by the actions of $\UU(3)$ and $\ZZ/2$ on $\CC^3$ by left multiplication and complex conjugation, respectively.
This action is isometric for the Fubini-Study metric $g_{\rm FS}$ on $\CP^2$.
The total space $V$ of this bundle is of dimension $n = \ell + 4$.

We consider the affine subspace
\[
      {\rm H}_1(3,\CC):=\{ A \in \CC^{3\times 3} \mid A^* = A , \tr(A) = 1\}  \subset \{ A \in \CC^{3\times 3} \mid A^* = A \}
\]
with the Riemannian metric induced from the Euclidean  inner product  $\langle A, B \rangle := \tr(AB)$ on the right hand real vector space.
The map  $A \mapsto A - \frac{1}{3} \Id$ induces an isometry ${\rm H}_1(3, \CC) \approx (\R^8, g_{\rm eucl.})$ and hence an isomorphism between the group ${\rm Iso}\left({\rm H}_1 ( 3, \CC), \frac{1}{3}  \Id\right)$ of isometries of $ {\rm H}_1(3, \CC)$ fixing $\frac{1}{3} \Id$ and  the orthogonal group $\OO(8)$.

Now, as in \cite{ta68}*{(2.13)}, we consider the well-known Veronese isometric embedding
\[
   (  \CP^2, g_{\rm FS})  \hookrightarrow {\rm H}_1(3,\CC)  \approx ( \R^8, g_{\rm eucl.})
\]
which   is induced by the map $\CC^3 \supset S^5 \to {\rm H}_1(3,\CC)$,
\begin{equation} \label{explimmersion}
 (x_0, x_1, x_2) \mapsto \left( \begin{array}{ccc} |x_0|^2 & x_0 \overline{x_1} & x_0 \overline{x_2} \\
 x_1 \overline{x_0} & |x_1|^2 & x_1 \overline{x_2} \\
 x_2 \overline{x_0} & x_2 \overline{x_1} & |x_2|^2
 \end{array} \right) .
\end{equation}

The embedding \eqref{explimmersion} is equivariant with respect to the Lie group homomorphism $\psi \colon G \to {\rm Iso}\left({\rm H}_1 ( 3, \CC), \frac{1}{3}  \Id\right) \cong \OO(8)$ where $\psi(R,1)(A) := R A R^*$ and $\psi(R, -1)(A) := R \overline{A} R^*$ for $(R, \pm 1) \in \UU(3) \rtimes \ZZ/2$ and $A \in {\rm H}_1 ( 3, \CC)$.

Let $P \to B$ be the $G$-principal frame bundle of $V \to B$.
Setting $E := P \times_{\psi} \R^8$ we hence realize $V \to B$ as a sub-fibre bundle of $E \to B$ as described in Setting  \ref{assumption}.
By Cohen's Immersion Theorem \cite{co} applied to the sphere bundle of $E \oplus {\RR}$ there exists an immersion $E \to \R^{N}$ with $N = 2(\ell + 8) - \beta(\ell+8) = 2n + 8 - \beta(n+4)$ where $\beta(m)$ stands for the number of ones in the dyadic expansion of $m$.
 With  \pref{postub} we conclude that there exists a scalar positive immersion $V \to \R^{N}$.

A similar construction applies to fibre bundles $V \to B$ with fibre $X = \HP^2$ and structure group $G =\Sp(3) = \{ R \in \HH^{3\times 3} \mid R^* R = \Id\}$.
Formula \eqref{explimmersion} defines an isometric embedding $(\HP^2, g_{\rm FS}) \hookrightarrow {\rm H}_1(3,\HH):=\{ A \in \HH^{3\times 3} \mid A^* = A , \tr(A) = 1\} \approx ( \R^{14} , g_{\rm eucl.})$ which is equivariant with respect to the Lie group homomorphism $\psi\colon G \to \OO(14)$  where $\psi(R)(A) := RAR^*$.
Hence, in this case, we obtain a scalar positive immersion $V \to \R^N$ with $N = 2(\ell + 14) - \beta(\ell+14) = 2n + 12 - \beta(n+6)$ where $n = \dim V =  \ell + 8$.
\end{exa}

While these examples are the relevant ones for the proof of \tref{main} in \sref{extrsurg}, it is clear that the previous construction applies to the total spaces of many other fibre bundles.

\section{Local deformation I: Normally spherical immersions} \label{nomsph}

Most of the remaining parts of this paper will be devoted to the implementation of the extrinsic surgery process, following the spirit of \cite{gl}.
The purpose of this section is to show how to deform a scalar positive immersion into one which, near a closed submanifold, maps the normal discs of that submanifold to spherical caps  in Euclidean space.

\medskip

We first fix some notation.
Let $f\colon M \to \R^N$ be an immersion of an $n$-dimensional manifold.
Its differential $f_*=df$ identifies $TM$ with a subbundle of $f^*(T\R^N)\cong M \times\R^N$ whose orthogonal complement with respect to the Euclidean metric on $\R^N$ is the normal bundle $\nu_f $ of $f$.
We denote the induced fibre metrics and fibre norms on bundles constructed from $TM$ and $\nu_f$ by $\langle \,\cdot\, , \cdot \,\rangle_f$ and $| \,\cdot\, |_f$, where we suppress the subscript if the immersion~$f$ is obvious from the context.
We denote by $\alpha_f \in \Gamma(T^*M \otimes T^*M \otimes \nu_f)$ the second fundamental form of $f$.
Hence, $\tr(\alpha_f) \in \Gamma(\nu_f)$ is the (unnormalized) mean curvature field of $f$, while, by the Gauss equation,
\begin{equation}\label{scalformula}
\scal_f = | \tr(\alpha_f) |^2 -|\alpha_f|^2\colon M \to \R
\end{equation}
is the (unnormalized) scalar curvature of (the metric induced by) $f$.
In particular, if  $f$ is scalar positive, its mean curvature nowhere vanishes.
This fact was first pointed out in \cite{gr70}*{p.~42} and was used several times in the literature, see for example \cites{gu01, ta04}.
We obtain the unit normal field
\begin{equation} \label{xi}
\xi:=\tr(\alpha_f) /|\tr(\alpha_f)| \in \Gamma(\nu_f) \, .
\end{equation}
The field $\xi$ points in the direction along which $f$ will be deformed.
Intuitively, deforming $f$ in the direction of $\xi$ increases  the mean curvature faster than the second fundamental form (see the proof of Lemma \ref{comp_first_deform}), therefore increasing the scalar curvature by \eqref{scalformula}.

\medskip

 From now on assume that $f$ is scalar positive and let $S \subset M$ be a closed submanifold of codimension $k = n - \dim S$ and with normal bundle $\nu_S^M \to S$.
For $\rho > 0$ we set
\[
U_\rho (S) : = \{ p \in M \mid d( p,S ) < \rho \} \subset M,
\]
where $d$ refers to the induced Riemannian distance on $M$.
In this section we fix $\rho_0 > 0$ such that the normal exponential map $\exp^{\perp}\colon \NUMS \to M$ induces a diffeomorphism
\[
 \exp^{\perp} \colon \{ |\eta| < \rho_0 \} \stackrel{\approx}{\longrightarrow} U_{\rho_0} ( S) \, .
\]
Hence we write points $p\in U_{\rho_0}(S)$ in polar coordinates $(q,\omega, s)$ where $q\in S$, $\omega \in (\NUMS)_q$, $|\omega|_f = 1$, $s \in [0,\rho_0)$ and $p= \exp^{\perp}_q(s \, \omega)$.
Note that in these coordinates we have $q = (q,\omega,0)$ for all $q \in S$ and all such $\omega$.

We define smooth maps $F_{\tau} , G_{\tau} \colon U_{\rho_0} (S) \to \R^{N}$ by
\begin{eqnarray*}
 F_{\tau}(p) & :=& f(p) + \frac{1}{2} \, \tau \, s^2 \, \xi(q) \, \text{ for } \tau \geq 0 , \\
 G_{\tau} (p) & := & f(q) + \tau^{-1} \sin( \tau s) \, \omega + \tau^{-1}( 1 - \cos( \tau s))\, \xi(q) \, \text{ for } \tau > 0 \, .
\end{eqnarray*}
The map $G_\tau$ is smooth at $s=0$ since, for $e_{k+1}=(0,\dots,0,1)$ and
\[
    \hat S^k(1/ \tau) : = \{ |x - \tau^{-1}e_{k+1} | =\tau^{-1} \} \subset \R^{k+1},
\]
the map $\R^k \to \R^{k+1}$ defined in polar coordinates $(u, s) \in S^{k-1} \times [0, \infty)$ by
\[
(u, s) \mapsto \tau^{-1} \sin (\tau s) \, u + \tau^{-1} ( 1- \cos ( \tau s)) \, e_{k+1}
\]
can be interpreted as the (smooth) exponential map $\exp_0 \colon \R^k = T_0 \hat S^k(1/ \tau) \to \hat S^k(1/ \tau) \subset \R^{k+1}$.
For  $\tau > 0$,  $q \in S$ and $0 < \rho < \min \{\rho_0 , \frac{\pi}{2\tau} \}$ the map $G_\tau$ immerses the closed normal $\rho$-disc based at $q \in S$,
\[
     \{ (q, \omega, s) \mid \omega \in (\NUMS)_q , |\omega|_f = 1, s \leq \rho\}  \subset U_{\rho_0}(S),
\]
as a spherical cap in $\R^N$ based at $f(q)$ and opening in direction $\xi(q)$; see Figure \ref{figure:nsi}.

Observe that
\begin{equation}\label{einsjet}
 F_{\tau}|_S = f|_S = G_{\tau}|_S \quad \text{and} \quad dF_{\tau}|_S = df|_S= d G_{\tau}|_S\, .
 \end{equation}
Hence $F_{\tau}$ and $G_\tau$ restrict to immersions $U \to \R^N$ on some neighborhood $S \subset U \subset U_{\rho_0}(S)~\subset~M$, and the identifications of $TM|_S$ with a subbundle of $S \times \R^N$ coincide for the immersions $f$,~$F_\tau$~and~$G_\tau$.
The same holds for the normal bundles $\nu_f$, $\nu_{F_\tau}$ and $\nu_{G_\tau}$ restricted to~$S$.
In particular, the second fundamental forms of $F_{\tau}$ and $G_\tau$ restrict to smooth sections of $ T^*M|_S \otimes T^*M|_S \otimes (\nu_f)|_S \to S$.

\definecolor{darkblue}{rgb}{0.0, 0.2, 0.4}

\usetikzlibrary{shadings}
\begin{figure}
\begin{tikzpicture}[>=latex]
    \def\r{3}
    \def\H{1.5}
    \begin{scope}
     \clip
    ({\r*cos(180)},{\r*sin(180)}) arc [start angle=-180,end
    angle=0,radius=\r];
     \shade[top color=blue,bottom color=blue!50,opacity=0.6]
 ({-\r},{-1.1*\r}) rectangle +({2*\r},{0.1*\r+\H});
    \shade[top color =blue!20, bottom color = blue!40] (0,{-\r+\H}) circle [x radius={sqrt(\r^2-(\r-\H)^2)},
    y radius={0.2*sqrt(\r^2-(\r-\H)^2)}];
    \end{scope}
    \fill[fill=black] (0,-\r) circle (2pt);
    \draw node at (0,-\r -0.4) {$f(q)$} ;
    \draw[thick, dotted] (0,-\r) to (0, {- \r + \H - 0.2*sqrt(\r^2-(\r-\H)^2)})  ;
    \draw[thick, ->] (0,{-\r + \H- 0.2*sqrt(\r^2-(\r-\H)^2)}) -- (0,-0.5*\r) -- node[near end, right] {$\xi(q)$} (0,0);
\end{tikzpicture}
\caption{In blue: the immersion $G_{\tau}$ of  the normal disc at $q \in S$}
\label{figure:nsi}
\end{figure}

Our aim in this section is to prove in \pref{local_deform} below that for large $\tau$ the scalar positive immersion $f$ can be globally deformed, through scalar positive immersions, to bring it into the {\em normally spherical shape} $\,G_{\tau}$ near $S$.
This deformation will be constructed near $S$ by first applying the deformation $F_{t \tau}$, $t \in [0,1]$, which creates a large curvature contribution in the direction $\xi$, and then linearly interpolating between the resulting immersion and $G_\tau$.
Using the local flexibility lemma \cite{bh}*{Theorem 1} this local deformation near $S$ can be extended to the required global deformation of scalar positive immersions $M \to \R^N$.

\medskip

\pref{local_deform} essentially depends on the next three computational lemmas.
To state the first one, for $q \in S$ and $X \in T_q M$ let $X^{\top} \in T_q S$ and $X^{\perp} \in (\NUMS)_q$ denote the orthogonal projections.
Notice that these coincide for our three immersions $f$, $F_\tau$ and $G_\tau$ in view of \eqref{einsjet}.

\begin{lem} \label{deform} For all $q \in S$ and $X, Y \in T_q M$ we obtain that
\begin{eqnarray}
 \label{um} \alpha_{F_\tau} (X,Y) & = & \alpha_f(X,Y) + \tau \, \langle X^{\perp} , Y^{\perp} \rangle_f \, \xi (q) \, , \\
 \label{dois} \alpha_{G_\tau} (X,Y) & = & \alpha_f (X^{\top}, Y^{\top}) + \alpha_f (X^{\top} , Y^{\perp}) + \alpha_f( X^{\perp}, Y^{\top}) + \tau \, \langle X^{\perp}, Y^{\perp} \rangle_f \, \xi(q) \, .
\end{eqnarray}

\end{lem}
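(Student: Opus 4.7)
My plan is to compute the 2-jets of $F_\tau$ and $G_\tau$ at a point $q \in S$ in normal exponential coordinates $(x, v)$ on a neighborhood of $S$, where $x$ varies in $S$ and $v \in \NUMS$, and to read off the second fundamental forms by projecting these 2-jets onto the common normal bundle $\nu_f|_S = \nu_{F_\tau}|_S = \nu_{G_\tau}|_S$ provided by \eqref{einsjet}. By bilinearity and symmetry of $\alpha$, it then suffices to handle the three blocks $(X^\top, Y^\top)$, $(X^\top, Y^\perp)$ and $(X^\perp, Y^\perp)$ separately, identifying throughout $v \in (\NUMS)_q$ with $df|_q(v) \in \R^N$.

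The one auxiliary Taylor expansion I would establish first is
\[
 f\big(\exp^{\perp}_q(v)\big) = f(q) + df|_q(v) + \tfrac{1}{2}\alpha_f(v, v) + O(|v|^3),
\]
obtained by expanding $f$ along the radial normal geodesic $\gamma_v(t) = \exp^{\perp}_q(tv)$, using $\nabla^M_{\dot\gamma}\dot\gamma = 0$ together with the Gauss formula $(f\circ\gamma_v)''(0) = \alpha_f(v, v)$ in $\R^N$. For $F_\tau$ the added term $\tfrac{1}{2}\tau|v|^2\xi(q)$ is already polynomial of degree two in $v$ with vanishing $\partial_{x^a}\partial_{x^b}$ and mixed $\partial_{x^a}\partial_{v^j}$ blocks at $v=0$; the only new contribution to the 2-jet is $\tau\delta_{ij}\xi(q)$ in the $\partial_{v^i}\partial_{v^j}$ block, which yields $\tau\langle X^\perp, Y^\perp\rangle_f\xi(q)$ after bilinear extension. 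Adding this to $\alpha_f(X, Y)$ produces \eqref{um}.

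For $G_\tau$ the key structural observation is that only $f(q)$ at the foot-point enters the definition, so the normal quadratic term $\tfrac{1}{2}\alpha_f(v,v)$ from $f\circ\exp^\perp$ is replaced rather than supplemented by the spherical-cap correction. Using $\tau^{-1}\sin(\tau s) = s + O(s^3)$ and $\tau^{-1}(1-\cos(\tau s)) = \tfrac{\tau}{2}s^2 + O(s^4)$, the definition rewrites as
\[
 G_\tau(x, v) = f(x) + df|_x(v) + \tfrac{\tau}{2}|v|^2 \xi(x) + O(|v|^3).
\]
Differentiating twice at $(x_0, 0)$ and projecting to $\nu_f$, the tangential block produces $\alpha_f(X^\top, Y^\top)$ from $\partial_{x^a}\partial_{x^b}f|_{x_0}$, the mixed block produces $\alpha_f(X^\top, Y^\perp)$ via $\pi_{\nu_f}(\nabla^{\R^N}_{\partial_a}df(e_j)) = \alpha_f(\partial_a, e_j)$, and the purely normal block contributes only $\tau\delta_{ij}\xi(x_0)$. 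The absence of an $\alpha_f(X^\perp, Y^\perp)$ term precisely reflects the straightening effect of $G_\tau$ and delivers \eqref{dois}.

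The main obstacle will really be bookkeeping: carefully tracking the identification $v \leftrightarrow df|_q(v)$, the coincidence of the normal bundles along $S$ from \eqref{einsjet}, and the smoothness of $G_\tau$ at $s = 0$ through its interpretation as the exponential map onto $\hat S^k(1/\tau)$. Once these identifications are in place, both formulas follow from the differentiations outlined above.
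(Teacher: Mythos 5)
Your argument is correct and is essentially the computation the paper itself carries out: both proofs use the agreement of the $1$-jets and of the normal bundles along $S$ from \eqref{einsjet} and then read off the second fundamental forms by taking second derivatives of the explicit formulas for $F_\tau$ and $G_\tau$ and projecting onto the common normal bundle $\nu_f|_S$. The only difference is organizational — the paper dispatches every block with a tangential argument by differentiating along curves in $S$ (where only the $1$-jet of the immersion enters) and reduces the remaining normal--normal block by polarization to the radial curves $s \mapsto (q,\omega,s)$, whereas you expand the full $2$-jet in normal exponential coordinates block by block.
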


\begin{proof} First assume $X \in T_q S$, let $\beta \colon ( - \varepsilon, \varepsilon) \to S$ be a smooth curve through $q$ with $\beta'(0) = X$, and let $\hat Y \colon (-\varepsilon, \varepsilon) \to TM \subset M \times \R^N$ be a vector field along $\beta$ with $\hat Y(0) = Y$.
By \eqref{einsjet} both $\alpha_{F_\tau} (X,Y)$ and $\alpha_{G_\tau} (X,Y)$ are equal to the orthogonal projection of $\hat Y'(0) \in \R^N$ onto $(\nu_f)_q$ and hence are equal to $\alpha_f(X,Y)$.
This and the symmetry of second fundamental forms show that for proving \lref{deform} we can restrict to the case $X,Y \in (\NUMS)_q$, and by polarization and bilinearity we can further restrict to the case $X = Y = \omega \in (\NUMS)_q$, $|\omega| = 1$.

Let $\beta\colon(-\varepsilon,\varepsilon) \to (\NUMS)_q \subset \R^N$ be the curve $\beta(s):=s\, \omega$.
Then $\beta'(0) = \omega$, and $(F_\tau\circ\beta)''(0)=(f\circ \beta)''(0)+ \tau \, \xi(q)$ and $(G_\tau \circ \beta )''(0)=\tau \, \xi(q)$.
This gives \eqref{um} and \eqref{dois} after projection onto $(\nu_f)_q$.
\end{proof}

\begin{lem} \label{comp_first_deform} Along $S$ we have  $ \scal_{F_{ \tau}} > 0$ for all $\tau \geq 0$.
\end{lem}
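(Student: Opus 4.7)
The plan is to pick an adapted orthonormal frame $e_1,\dots,e_n$ of $T_qM$ at a point $q\in S$, with $e_1,\dots,e_{n-k}$ spanning $T_qS$ and $e_{n-k+1},\dots,e_n$ spanning $(\NUMS)_q$, and then compute $\scal_{F_\tau}=|\tr(\alpha_{F_\tau})|^2-|\alpha_{F_\tau}|^2$ explicitly via formula \eqref{um}. Writing $h:=|\tr(\alpha_f)|$, the trace computation is immediate: since $\langle e_i^\perp,e_i^\perp\rangle=1$ exactly for $i\geq n-k+1$ and vanishes otherwise,
\[
 \tr(\alpha_{F_\tau})=\tr(\alpha_f)+\tau k\,\xi(q)=(h+\tau k)\,\xi(q),
\]
so $|\tr(\alpha_{F_\tau})|^2=h^2+2\tau k h+\tau^2 k^2$.

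For the squared norm, only the ``doubly normal'' entries of $\alpha_{F_\tau}$ receive a correction, and only diagonally. Expanding $|\alpha_f(e_i,e_j)+\tau\delta_{ij}\xi|^2$ for $i,j\geq n-k+1$ and adding the unchanged mixed and tangential entries, I expect to obtain
\[
 |\alpha_{F_\tau}|^2=|\alpha_f|^2+2\tau H^\perp+\tau^2 k,\quad H^\perp:=\sum_{i\geq n-k+1}\langle\alpha_f(e_i,e_i),\xi(q)\rangle.
\]
Subtracting gives the clean identity
\[
 \scal_{F_\tau}=\scal_f+2\tau(kh-H^\perp)+\tau^2 k(k-1).
\]

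The main task is then to show $kh-H^\perp>0$. The quadratic term is automatically $\geq 0$, and $\scal_f>0$ handles $\tau=0$, so it suffices to control the linear coefficient. Here I will use that $H^\perp$ is the trace over a $k$-dimensional subspace of the symmetric bilinear form $\langle\alpha_f(\cdot,\cdot),\xi\rangle$, so by Cauchy--Schwarz $|H^\perp|\leq \sqrt{k}\,|\alpha_f|$, while $\scal_f>0$ combined with \eqref{scalformula} forces $|\alpha_f|<h$. Hence $|H^\perp|<\sqrt{k}\,h\leq k h$ for $k\geq 1$, which yields $kh-H^\perp>0$ and therefore $\scal_{F_\tau}>0$ for every $\tau\geq 0$.

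The only mildly delicate step is the case $k=1$, where the $\tau^2$ coefficient vanishes; but the strict inequality $|\alpha_f|<h$ (coming from $\scal_f>0$ and the fact that $\xi$ is a unit normal direction) is already strong enough to give strict positivity of the linear coefficient, so no separate argument is needed. I do not expect genuine obstacles — this lemma is essentially a bookkeeping exercise exploiting the defining property \eqref{xi} of $\xi$ together with the Gauss equation \eqref{scalformula}.
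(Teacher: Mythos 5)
Your proof is correct and takes essentially the same route as the paper: both compute $\tr(\alpha_{F_\tau}) = (h + \tau k)\,\xi$ exactly from \eqref{um}, bound the cross term in $|\alpha_{F_\tau}|^2$ via Cauchy--Schwarz (the paper does this by applying the triangle inequality $|\alpha_{F_\tau}| \leq |\alpha_f| + \tau\sqrt{k}$ directly), and close with $|\alpha_f| < h$ from $\scal_f > 0$ together with the Gauss equation. Your exact identity $\scal_{F_\tau} = \scal_f + 2\tau(kh - H^\perp) + \tau^2 k(k-1)$ is a minor refinement of the paper's lower bound $\scal_{F_\tau} \geq \scal_f + \tau^2(k^2-k)$, but the underlying estimates are the same.
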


\begin{proof} We work along $S$ throughout.
Let $\tau \geq 0$.
As $\tr(\alpha_f)$ is a positive multiple of $\xi$, \eqref{um} implies that
\[
 |\tr(\alpha_{F_\tau} )|^2 = \big( |\tr(\alpha_f)|+ \tau k \big)^2 = |\tr(\alpha_f)|^2+ 2 |\tr(\alpha_f)| \tau k + \tau^2 k^2 \, .
\]
Furthermore, by the triangle inequality,
\[
 |\alpha_{F_\tau} |^2 \leq \big( | \alpha_f|+ \tau \sqrt{k} \big)^2 = |\alpha_f|^2 + 2 |\alpha_f| \tau \sqrt{k} + \tau^2 k \, .
\]
Since $|\tr(\alpha_f)| > | \alpha_f|$ by our assumption $\scal_f > 0$, the Gauss equation gives us
$$
 \scal_{F_\tau} = |\tr(\alpha_{F_\tau} )|^2 - | \alpha_{F_\tau} |^2 \geq \scal_f + \tau^2 (k^2 - k) \geq \scal_f > 0 \, .
$$
\vskip -0.7cm\end{proof}\vskip 0.15cm

\begin{lem}\label{affine_comb}
If $k \geq 2$,  there exists $\tau_0 > 0$ such that, for all $\tau \geq \tau_0$ and $t \in [0,1]$, it holds that $\scal_{(1-t) F_\tau + t G_\tau} > 0$ along $S$.
\end{lem}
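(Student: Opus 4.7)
The plan is to exploit the fact that, by \eqref{einsjet}, all three maps $f$, $F_\tau$, $G_\tau$ share the same $1$-jet along $S$. Setting $H_{t,\tau} := (1-t)F_\tau + t G_\tau$, convexity preserves this: $H_{t,\tau}$ and $dH_{t,\tau}$ restricted to $S$ coincide with $f$ and $df$ restricted to $S$. In particular $H_{t,\tau}$ is an immersion on some neighborhood of $S$, and its tangent and normal bundles along $S$ are identified with those of $f$. Since the second fundamental form of an immersion along $S$ is obtained by applying the fibrewise orthogonal projection onto $(\nu_f)|_S$ to the (locally-defined) second derivative, and since the latter is linear in the immersion, we obtain the crucial identity
\[
\alpha_{H_{t,\tau}} = (1-t)\,\alpha_{F_\tau} + t\,\alpha_{G_\tau}
\]
as sections of $T^*M|_S \otimes T^*M|_S \otimes (\nu_f)|_S$.

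Next I would substitute the formulas from \lref{deform}. For $X,Y \in T_q S$ or for mixed arguments, both $\alpha_{F_\tau}$ and $\alpha_{G_\tau}$ reduce to $\alpha_f$, so the convex combination also equals $\alpha_f(X,Y)$. For $X,Y \in (\nu_S^M)_q$ the two contributions differ only in the coefficient of $\alpha_f(X^\perp,Y^\perp)$, and the combination yields
\[
\alpha_{H_{t,\tau}}(X,Y) = (1-t)\,\alpha_f(X^\perp,Y^\perp) + \tau\,\langle X^\perp,Y^\perp\rangle_f\,\xi(q).
\]
The key point is that the coefficient of $\tau \xi(q)$ is $(1-t)+t = 1$, i.e., the normal-normal $\tau$-contribution survives convex combination undiminished, while the $\alpha_f$-part in the normal block only gets damped by $(1-t) \in [0,1]$.

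Choosing at each $q \in S$ an orthonormal basis $e_1,\dots,e_{n-k}$ of $T_qS$ and $e_{n-k+1},\dots,e_n$ of $(\nu_S^M)_q$, a direct computation gives $\tr(\alpha_{H_{t,\tau}})(q) = A(t,q) + k\tau\,\xi(q)$, where $A(t,q)$ is a linear combination of values of $\alpha_f$ with coefficients in $[0,1]$, hence uniformly bounded on $S$ independently of $t$. Therefore $|\tr(\alpha_{H_{t,\tau}})|^2 = k^2\tau^2 + O(\tau)$ uniformly. Splitting $|\alpha_{H_{t,\tau}}|^2 = \sum_{i,j}|\alpha_{H_{t,\tau}}(e_i,e_j)|^2$ into the three blocks, only the $k$ diagonal normal-normal terms produce a quadratic-in-$\tau$ contribution, namely $k\tau^2$, all other terms being bounded or contributing at most linearly in $\tau$ via the cross term $2(1-t)\tau\langle \alpha_f(e_i,e_i),\xi(q)\rangle$. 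Thus $|\alpha_{H_{t,\tau}}|^2 = k\tau^2 + O(\tau)$, again uniformly in $t \in [0,1]$ and $q \in S$.

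Finally, the Gauss equation \eqref{scalformula} yields
\[
\scal_{H_{t,\tau}} = |\tr(\alpha_{H_{t,\tau}})|^2 - |\alpha_{H_{t,\tau}}|^2 = k(k-1)\tau^2 + O(\tau)
\]
uniformly, and since $k \geq 2$ we have $k(k-1) \geq 2$, so a sufficiently large $\tau_0$ forces positivity for all $\tau \geq \tau_0$ and all $t \in [0,1]$. The main conceptual step is the first one, the linearity of the second fundamental form under convex combination on $S$; the rest is a routine quadratic-versus-linear asymptotic argument made uniform by the compactness of $S$.
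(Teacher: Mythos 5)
Your argument is correct and follows essentially the same route as the paper: both use that, by \eqref{einsjet}, the affine combination has the same $1$-jet as $f$ along $S$, so its second fundamental form there is $(1-t)\alpha_{F_\tau}+t\,\alpha_{G_\tau}$, and then combine the formulas of \lref{deform} with the Gauss equation to get $\scal_{(1-t)F_\tau+tG_\tau}\geq (k^2-k)\tau^2-O(\tau)$ uniformly in $t$ and $q\in S$, which is positive for large $\tau$ since $k\geq 2$. The only cosmetic difference is that you expand $|\alpha|^2$ block by block while the paper packages the bounded terms into one constant $C$ via the triangle inequality.
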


\begin{proof} By \eqref{um} and \eqref{dois} there exists $C \geq 0$, which only depends on the restriction of $\alpha_f$ to $S$, such that, for all $\tau > 0$, $q \in S$, and $X,Y\in T_qM$, we get
\[
 \big| (1- t) \, \alpha_{F_\tau} (X,Y) + t \, \alpha_{G_\tau} (X,Y) - \tau \langle X^{\perp} , Y^{\perp} \rangle \, \xi(q) \big| \leq C |X| |Y| \, .
 \]
Hence, by the triangle inequality,
\[
 \big| \tr\big( (1- t) \, \alpha_{F_\tau} + t \, \alpha_{G_\tau} \big) \big| \geq k \tau - nC \, .
\]
Similarly,
\[
 \big| (1- t) \, \alpha_{F_\tau} + t \, \alpha_{G_\tau} \big| \leq \sqrt{k} \tau + \sqrt{n^2 C^2} = \sqrt{k} \tau + nC \, .
\]
Assuming that $k \tau \geq nC$, the Gauss equation hence implies that, along $S$,
$$
\scal_{(1-t) F_\tau + t G_\tau} \geq (k\tau - nC)^2 - ( \sqrt{k} \tau + nC)^2 = ( k^2 - k) \tau^2 - 2( k + \sqrt{k}) \tau n C.
$$
Since $k \geq 2$, there exists $\tau_0 \geq nC / k$ such that the last expression is positive for all $\tau \geq \tau_0$.
\end{proof}

We finally have all the ingredients to prove the main result of this section.

\begin{prop}[Normally spherical immersions] \label{local_deform}
If $k \geq 2$, there exists $\tau_0 > 0$ such that for all $\tau \geq \tau_0$ there exist $0 < \rho \leq \rho_0$ and a continuous family $f_t\colon M \to \R^N$, $t \in [0,1]$, of scalar positive immersions with
$f_0 = f$, ${f_t}|_{ M \setminus U_{\rho_0}(S)} = f|_{ M \setminus U_{\rho_0}(S)}$ for $t \in [0,1]$ and
$$
{f_{t}}|_{U_\rho(S)} =
\begin{cases}
F_{2t\tau}|_{U_\rho(S)} & \text{ for } 0 \leq t \leq 1/2 , \\
\big( (2-2t) F_\tau + (2t-1) G_\tau \big) |_{U_\rho(S)} & \text{ for } 1/2 \leq t \leq 1.
\end{cases}
$$
In particular, ${f_1}|_{ M \setminus U_{\rho_0}(S)} = f|_{ M \setminus U_{\rho_0}(S)}$ and $f_1|_{U_{\rho}(S) } = G_{\tau}|_{U_{\rho}(S)}$.
\end{prop}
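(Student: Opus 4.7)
The plan is to first carry out the deformation locally on a thin tube around $S$, verify that scalar positivity is preserved there, and then apply the local flexibility lemma of \cite{bh} to extend it to a global deformation of scalar positive immersions on all of $M$.

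I take $\tau_0$ to be the constant provided by \lref{affine_comb} and fix $\tau \geq \tau_0$. On the tubular neighborhood $U_{\rho_0}(S)$ I define the two-piece family $\tilde f_t$ by the formulas prescribed in the statement. Since $F_0 = f$, this gives $\tilde f_0 = f$ on $U_{\rho_0}(S)$, and the two halves of the definition agree at $t = 1/2$ (where both equal $F_\tau$), so $\tilde f_t$ depends continuously on $t \in [0,1]$. By \eqref{einsjet} each $\tilde f_t$ coincides with $f$ to first order along $S$ and is therefore an immersion on some neighborhood of $S$.

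For $t \in [0,1/2]$, $\tilde f_t = F_{2t\tau}$ is scalar positive along $S$ by \lref{comp_first_deform}, and for $t \in [1/2,1]$, $\tilde f_t = (2-2t)F_\tau + (2t-1)G_\tau$ is scalar positive along $S$ by \lref{affine_comb}. Since both $\scal_{\tilde f_t}(p)$ and the non-degeneracy of $d_p \tilde f_t$ depend continuously on $(t,p) \in [0,1] \times U_{\rho_0}(S)$, and since $[0,1] \times S$ is compact, there exists $0 < \rho \leq \rho_0$ such that $\tilde f_t|_{U_\rho(S)}$ is a scalar positive immersion for every $t \in [0,1]$.

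To complete the proof I need to extend the local family $\tilde f_t|_{U_\rho(S)}$ to a continuous family of global scalar positive immersions $f_t \colon M \to \R^N$ that agrees with $f$ outside $U_{\rho_0}(S)$ and with $\tilde f_t$ on $U_\rho(S)$. I will do this by invoking the local flexibility lemma \cite{bh}*{Theorem 1}, which is designed precisely for this kind of globalization problem: a localized scalar positive perturbation of a scalar positive immersion can be extended through scalar positive immersions on all of $M$, with the extension equal to the original immersion outside a slightly larger neighborhood. The main obstacle will be verifying the quantitative hypotheses of the local flexibility lemma uniformly in the parameter $t$, which will likely require shrinking $\rho$ further so that $\tilde f_t$ is uniformly $C^1$-close to $f$ on $U_\rho(S)$ across all $t \in [0,1]$; once this quantitative control is in place, the lemma directly produces the required family $f_t$ and the final conclusions $f_1|_{M\setminus U_{\rho_0}(S)} = f|_{M\setminus U_{\rho_0}(S)}$ and $f_1|_{U_\rho(S)} = G_\tau|_{U_\rho(S)}$ follow from the construction.
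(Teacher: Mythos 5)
Your proposal follows essentially the same route as the paper's own proof: fix $\tau_0$ from \lref{affine_comb}, use \eqref{einsjet}, \lref{comp_first_deform} and \lref{affine_comb} together with openness/compactness to get a uniform neighborhood of $S$ on which the two-piece family consists of scalar positive immersions with $t$-independent $1$-jets along $S$, and then globalize via the local flexibility lemma. The only inessential deviation is your worry about ``quantitative hypotheses'': \cite{bh}*{Theorem 1} requires no $C^1$-closeness to $f$, only that scalar positive immersions form an open relation on $2$-jets and that the local family solves it with $\tilde f_0=f$, so the globalization step is immediate.
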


\begin{proof} Choose $\tau_0$ as in \lref{affine_comb} and let $\tau \geq \tau_0$.
By \eqref{einsjet}, \lref{comp_first_deform} and \lref{affine_comb}, there exists an open neighborhood $S \subset U \subset U_{\rho_0}(S) \subset M$ such that for all $t \in [0,1]$ the maps $F_{t\tau}$ and $(1-t) F_\tau + t G_\tau$ restrict to scalar positive immersions $U \to \R^N$ whose $1$-jets along $S$ do not depend on $t$.
Since being a scalar positive immersion defines an open partial differential relation on the $2$-jets of maps $M \to \R^N$, the claim follows from the local flexibility lemma \cite{bh}*{Theorem 1}.
\end{proof}

With the help of Gromov's $h$-principle for open, Diff-invariant partial differential relations over open manifolds, see \cite{gr86}, the computations in this section can also be used to justify Remark~\ref{remquest} as follows.

\begin{prop} \label{hprinc}
Let $M$ be a non-compact connected manifold of dimension at least $2$ admitting an immersion $M \to \R^N$.
Then there exists a scalar positive immersion $M \to \R^{N+1}$.
\end{prop}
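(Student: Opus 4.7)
The plan is to apply Gromov's $h$-principle for open, $\mathrm{Diff}$-invariant partial differential relations on open manifolds \cite{gr86} to the relation $\mathcal R \subset J^2(M, \R^{N+1})$ consisting of those $2$-jets realized at a point by a scalar positive immersion. This relation is open (just as used in the proof of \pref{local_deform}: being an immersion and having positive scalar curvature are both open conditions on $2$-jets via the Gauss identity \eqref{scalformula}), and it is $\mathrm{Diff}$-invariant since both conditions are preserved under reparametrization of the source. Because $M$ is non-compact and connected, hence open in the sense of the $h$-principle, it suffices to exhibit a formal solution, that is a continuous (in fact smooth) section of $J^2(M, \R^{N+1}) \to M$ with image in $\mathcal R$. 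Once such a section is produced, the $h$-principle deforms it to a holonomic section, which is precisely a scalar positive immersion $M \to \R^{N+1}$.

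The main step is therefore the construction of the formal section. Let $\tilde f := (f, 0) \colon M \to \R^N \times \R = \R^{N+1}$; this is an immersion whose image lies in the affine hyperplane $\R^N \times \{0\}$, so the constant unit vector $e_{N+1} = (0, \dots, 0, 1)$ is a globally defined unit normal field along $\tilde f$. Let $g$ denote the Riemannian metric induced on $M$ by $\tilde f$ (equivalently by $f$). At each point $p \in M$, I would take as formal $2$-jet the one whose $0$- and $1$-jets coincide with those of $\tilde f$ at $p$, and whose pure second-order component is the symmetric $\R^{N+1}$-valued bilinear form $g_p(\cdot, \cdot)\, e_{N+1}$ on $T_p M$. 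This depends smoothly on $p$, and since $e_{N+1}$ is orthogonal to $d\tilde f(T_pM)$, the normal projection of this tensor is itself, so the associated formal second fundamental form is $\alpha^{\mathrm{form}}_p = g_p \cdot e_{N+1}$.

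A direct computation in the spirit of \lref{comp_first_deform}, using \eqref{scalformula}, gives in any $g$-orthonormal frame on $T_pM$
\[
|\tr_g(\alpha^{\mathrm{form}})|^2 = |n\, e_{N+1}|^2 = n^2, \qquad |\alpha^{\mathrm{form}}|^2 = \sum_{i,j=1}^{n} \delta_{ij}^2 = n,
\]
so the formal scalar curvature equals $n^2 - n > 0$ because $n \geq 2$. Hence the section lies in $\mathcal R$ everywhere, and applying the $h$-principle finishes the proof.

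The only nontrivial point is to identify an explicit formal solution; after that, the analytic content is confined to the elementary inequality $n(n-1) > 0$ and the rest is a direct invocation of \cite{gr86}. I do not expect a serious obstacle.
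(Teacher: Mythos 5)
Your proposal is correct and follows the same overall strategy as the paper's proof: invoke Gromov's $h$-principle for the open, Diff-invariant relation $\mathscr R$ defined by scalar positive immersions, reducing the problem to producing a formal solution over the open manifold $M$. The only genuine difference lies in the construction of the formal section. The paper keeps the full $2$-jet $j^2\phi(p)$ of the given immersion in the $\R^N$ slot and adds $\tau(p)\,j^2\bigl(x\mapsto d(p,x)^2\bigr)(p)$ in the last coordinate; since the second fundamental form of $\phi$ may make a negative contribution via the Gauss equation, $\tau$ must be chosen large enough on each compact piece, and one patches $\tau$ over a locally finite exhaustion of $M$. You instead discard the pure second-order part of $\phi$ entirely (replacing it by zero) and place $g_p\cdot e_{N+1}$ in the normal direction, which immediately yields the constant formal scalar curvature $n^2-n>0$ with no compactness or patching argument needed. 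This is a clean simplification. One small point worth making explicit: the ``pure second-order component'' of a $2$-jet is only canonical after a choice of splitting of $J^2\to J^1$; using the Levi-Civita connection of the induced metric $g$, or equivalently taking the $2$-jet at $p$ of the map $x\mapsto \tilde f(p)+d\tilde f_p\bigl(\exp_p^{-1}x\bigr)+\tfrac12\,g_p\bigl(\exp_p^{-1}x,\exp_p^{-1}x\bigr)\,e_{N+1}$, removes the ambiguity and makes the section manifestly smooth and globally well-defined.
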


\begin{proof}
Consider the trivial vector bundle $X = M \times \R^{N+1} \to M$ and the bundle $X^{(2)} \to M$ of $2$-jets of smooth maps $M \to \R^{N+1}$.
Given a smooth map $f \colon M \to \R^{N+1}$, we denote by $j^2 f \colon M \to X^{(2)}$ its second order jet map.
Recall that for $p \in M$ the value $j^2 f(p) \in (X^{(2)})_p$ only depends on the restriction of $f$ to some neighborhood of $p$.
Being a scalar positive immersion  defines an open, ${\rm Diff}(M)$-invariant partial differential relation $\mathscr{R} \subset X^{(2)} $.

Let $\phi \colon M \to \R^N$ be an immersion.
For a continuous map $\tau \colon M \to \R$ consider the continuous section $\phi_{\tau} \colon M \to X^{(2)}$,
\[
p \mapsto \left(j^2 \phi(p), \tau(p)\, j^2\big( x \mapsto \, d(p,x)^2 \big) (p)\right).
\]
Since $\dim M \geq 2$, a computation as in the proof of Lemma \ref{comp_first_deform} implies that, for each compact $K \subset M$, there exists $\tau_0 \in (0, \infty)$ such that, if $\tau \geq \tau_0$ on $K$, we have $\phi_{\tau}(K) \subset \mathscr{R}$, that is $\phi_\tau$ formally solves $\mathscr{R}$ over $K$.
Using a locally finite cover of $M$ by relatively compact open subsets, we hence find $\tau\colon M \to \R$ such that $\phi_{\tau}$ formally solves $\mathscr{R}$ over $M$.
Gromov's $h$-principle implies that there exists a smooth map $f \colon M \to \R^{N+1}$ solving $\mathscr{R}$.
\end{proof}

\section{Local deformation II: Bending profiles} \label{bending}

The aim of this section is to show that a scalar positive immersion which is normally spherical near a closed submanifold as in \pref{local_deform} can be further deformed, again through scalar positive immersions, into a shape proper to add a surgery handle.

\medskip

As in the previous section, let $f \colon M \to \R^N$ be a scalar positive immersion, let $n := \dim M$ and let $S \subset M$ be a closed embedded submanifold of codimension $k$ and with normal bundle $\NUMS \to S$.
If $E \to B$ is a Euclidean vector bundle and $\rho > 0$, we denote by $D_{\rho}(E) = \{  |\eta| \leq \rho\} \to B$ the closed $\rho$-disc bundle and by $S_{\rho}(E)= \{  |\eta| = \rho\} \to B$ the $\rho$-sphere bundle of $E$.
Points in $S_{1}(\NUMS)_q$ are written in the form  $(q, \omega)$ with $\omega \in (\NUMS)_q$ of norm one.

Since $S$ is compact and $\xi$ in \eqref{xi} is normal to $f$, we find $0 < \rho_0 \leq 1$ such that the map $S_1(\NUMS) \times D_{\rho_0 } (\R^2) \to \R^N$,
\[
 (q,\omega, a,b) \mapsto f(q) + a \, \omega + b \, \xi(q) \, ,
\]
is an immersion.
In the remainder of this section we fix such a $\rho_0$.

\medskip

\begin{dfn} Let $I \subset \R$ be a compact interval and $\gamma\colon I \to \R^2$,  $\gamma(s) = (a(s) ,b(s))$, be a regular smooth curve.
For $0 < \rho \leq \rho_0$, we say that $\gamma$ is of {\em extent} $\rho$, if $| \gamma(s)| < \rho$ for all $ s\in I$.
\end{dfn}

We now consider the compact manifold with boundary
\[
 \Sigma := S_1(\NUMS) \times I .
\]
For $\gamma$ of extent $0 < \rho \leq \rho_0$ we obtain an immersion $F_\gamma\colon \Sigma \to \R^N$ along the {\em bending profile} $\gamma$,
\begin{equation} \label{defFg}
 F_{\gamma} (q, \omega, s) := f(q) + a(s) \, \omega + b(s) \, \xi (q) \, .
\end{equation}

In this section we will first derive a lower bound for $\scal_{F_\gamma}$ for certain $\gamma$; see \pref{main_est}.
This requires some preparation which we shall again split into a number of lemmas.
After solving a pertinent ODE for $\gamma$ in  Lemma \ref{bc}, Proposition \ref{bending_curve}  provides the bending profiles required for the extrinsic surgery in  \sref{extrsurg}.

\medskip

The projection $\pi \colon \Sigma \to S$, $ \pi (q, \omega, s) := q$,  is a smooth submersion and hence induces an orthogonal direct sum decomposition of $T\Sigma$ into vertical and horizontal subbundles,
\[
 \mathscr{V} = \ker d\pi \subset T \Sigma \, , \quad \mathscr{H} = \mathscr{V}^{\perp_{F_{\gamma} }} \subset T\Sigma \, .
\]
For $X \in T\Sigma$ we denote by $\mathscr{V}X \in \mathscr{V}$ and $\mathscr{H}X \in \mathscr{H}$ its vertical and horizontal components.
Note that for $p = (q, \omega, s) \in \Sigma$ we have an orthogonal splitting of $\mathscr{V}_p$ with respect to the metric induced by $F_\gamma$,
\begin{equation} \label{splittingtan}
 \mathscr{V}_p = \omega^{\perp} \oplus {\rm span} \{\partial_s\} \subset (\NUMS)_q \oplus^{\perp} T_s I \, .
\end{equation}

Let $K \subset S$ be a compact subset which is contained in some open coordinate neighborhood $K \subset U\subset S$ admitting an orthogonal local bundle trivialization
\[
 \Psi \colon \NUMS|_U\stackrel{\cong}{\to} U\times \R^k \, .
\]
This induces a diffeomorphsim $\pi^{-1}(U) \approx U\times \skmu \times I$.
For $p = (q, \omega, s) \in \Sigma$ with $q \in U$ and $\omega \in \skmu \approx S_1(\NUMS)_q$, we hence obtain a direct sum decomposition
\begin{equation} \label{splitting}
 T_p \Sigma \cong T_q S \oplus T_\omega \skmu \oplus T_s I
\end{equation}
with $T_{\omega} \skmu = \omega^{\perp} \subset \R^k$.
Note that $T_q S$ is, in general, not orthogonal to $\mathscr{V}_p$.
For $X \in T_q S$ we denote by $X_p \in T_p \Sigma$ the vector $(X, 0,0)$ in the decomposition \eqref{splitting}.

\begin{lem} \label{difficult}
For all such $K$ and $\Psi$ there exists $C \geq 0$ such that, for all unit speed curves $\gamma$ of extent $0 < \rho \leq \rho_0$, for all $p = (q, \omega, s) \in K \times \skmu \times I$ and for all $X,Y \in T_q S$, $V \in \mathscr{V}_p$ and  $Z \in (\NUMS)_q \oplus \R \xi(q) \subset \R^N$, it holds that
\begin{eqnarray}
 \label{deux}
 \big| \alpha_{F_{\gamma}} (X_p ,Y_p) \big|_{F_\gamma} & \leq & C |X|_f |Y|_f  , \\
 \label{trois}
 \big| \alpha_{F_{\gamma}} (X_p, V \big)\big|_{F_\gamma} & \leq & C |X|_f |V|_{F_{\gamma}}  , \\
 \label{quatre}
 \big| \langle d_p F_{\gamma} (X_p) , Z \rangle \big| & \leq & \rho C |X|_f |Z| , \\
 \label{estvert}
 \big| \mathscr{V}X_p \big|_{F_\gamma}  & \leq & \rho C |X|_f , \\
 \label{esthor}
\big|\mathscr{H} X_p\big|_{F_\gamma} & \geq & (1 - \rho C) |X|_f .
 \end{eqnarray}
 \end{lem}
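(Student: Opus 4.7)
The plan is to express $F_\gamma$ in the local trivialization $\Psi$, derive a single explicit formula for $d_pF_\gamma$ on each of the three summands of \eqref{splitting}, and then read off all five estimates from the same underlying calculation. The three essential ingredients are: pointwise orthogonalities in $\R^N$ coming from the isometric inclusion $\NUMS \hookrightarrow df(TM) \perp \nu_f$ together with $V_\omega \perp \omega$ in $\R^k$; the bounds $|a|,|b|\leq\rho$ and $|a'|,|b'|\leq 1$; and uniform $C^1$-bounds of the frames $\omega_*, \xi$ over the compact set $K$.

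Concretely, let $\omega_* \colon U \to \NUMS$ denote the $\Psi$-constant extension of $\omega \in (\NUMS)_q$, so that $F_\gamma(q',\omega,s) = f(q') + a(s)\,\omega_*(q') + b(s)\,\xi(q')$ in the trivialization. Extending $X_p$ and $V = V_\omega + t\partial_s \in \mathscr{V}_p$ to stay fibre-constant under $\Psi$ gives the two formulas
\[
 d_p F_\gamma(X_p) = df(X) + a(s)\,d\omega_*(X) + b(s)\,d\xi(X),
\]
\[
 d_p F_\gamma(V) = a(s)\,V_\omega + t\bigl(a'(s)\,\omega + b'(s)\,\xi(q)\bigr).
\]
The pairwise orthogonality of $df(T_qS)$, $\omega$, $V_\omega$ and $\xi(q)$ in $\R^N$, together with $a'^2 + b'^2 = 1$, yields the key identity
\[
 |V|_{F_\gamma}^2 = a(s)^2\,|V_\omega|^2 + t^2,
\]
so in particular $|a(s)|\,|V_\omega| \leq |V|_{F_\gamma}$ and $|t| \leq |V|_{F_\gamma}$.

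From here (quatre) is immediate, since $df(X) \perp Z$ for $Z \in (\NUMS)_q \oplus \R\xi(q)$ leaves only the two $\rho$-small terms in $d_pF_\gamma(X_p)$. For (deux), I would differentiate the first formula once more using the flat connection on $\R^N$ and a fibrewise-constant extension of $Y_p$, producing second $q$-derivatives of $df, \omega_*, \xi$ whose norms are uniformly bounded on $K$; the $F_\gamma$-normal projection is non-expanding. For (trois), the analogous differentiation of the second formula gives
\[
 \overline{\nabla}_{X_p} dF_\gamma(V) = a(s)\,dV_\omega^*(X) + t\bigl(a'(s)\,d\omega_*(X) + b'(s)\,d\xi(X)\bigr),
\]
and each of the three summands is bounded by $C|X|_f|V|_{F_\gamma}$ upon invoking the key identity.

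For the last two estimates I would expand $\langle X_p, V\rangle_{F_\gamma} = \langle dF_\gamma(X_p), dF_\gamma(V)\rangle_{\R^N}$ using the same orthogonalities: the potentially large pairings $\langle df(X), V_\omega\rangle$, $\langle df(X),\omega\rangle$, $\langle df(X),\xi(q)\rangle$ all vanish, and every remaining term can be rewritten so that one factor of $a$ or $b$ combines with $V_\omega$ or $t$ to produce at most $\rho|V|_{F_\gamma}$ (for example $|ab\,V_\omega| = |b|\cdot|aV_\omega| \leq \rho|V|_{F_\gamma}$). This gives $|\langle X_p,V\rangle_{F_\gamma}|\leq \rho C |X|_f|V|_{F_\gamma}$, and the variational characterization of the orthogonal projection yields (estvert). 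Finally (esthor) follows from Pythagoras, (estvert), and the elementary bound $|X_p|_{F_\gamma}^2 \geq (1-\rho C)|X|_f^2$ obtained by expanding $|d_pF_\gamma(X_p)|^2$ and absorbing the cross and quadratic terms into $\rho$ (enlarging $C$ so that the estimate is trivially true whenever $\rho C\geq 1$). The main obstacle is purely organisational: the single subtle point is the observation that the factor $a(s)$ in $d_pF_\gamma(V)$ cancels against the potential smallness of $a(s)$ via the key identity, which is precisely what lets (trois) hold without any lower bound on $a$.
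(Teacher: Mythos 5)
Your proposal is correct and follows essentially the same route as the paper's proof: you work in the trivialization $\Psi$, bound everything by the $C^2$-data of $f$, $\xi$ and the frame $\hat e_1,\dots,\hat e_k$ over the compact set $K$ together with $|a|,|b|\le\rho\le 1$ and unit speed, and exploit exactly the cancellation $|a(s)|\,|V_\omega|\le |V|_{F_\gamma}$ that the paper uses via $|V|_{F_\gamma}=|a(s)|\,|V|_f$ on the sphere factor. The only cosmetic difference is at \eqref{estvert}--\eqref{esthor}, where the paper projects the single estimate $|d_pF_\gamma(X_p)-d_qf(X)|\le\rho C|X|_f$ onto $d_pF_\gamma(\mathscr{V}_p)$ and its orthogonal complement, which avoids your Pythagoras step and the harmless enlargement of $C$.
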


\begin{proof}
For $\eta \in \R^k$ we define $\hat \eta \colon U\to \R^N$ as $\hat \eta (q) := \Psi_q^{-1} ( \eta ) \in (\NUMS)_q \subset \R^N$.
Hence the standard basis $(e_1, \ldots, e_k)$ of $\R^k$ yields an orthonormal frame $(\hat e_1, \ldots, \hat e_k)$ of $(\NUMS)|_U$.
Choose local coordinates $(x^1, \ldots, x^{n-k})$ over $U$.
Setting $\partial_i = \frac{\partial}{\partial x^{i}}$ for $1 \leq i \leq n-k$ this induces a local frame $(\partial_1, \ldots, \partial_{n-k})$ of $TS$ over $U$.

In each of the following estimates, $C$ denotes some non-negative constant which depends on the local coordinates $(x^{1}, \ldots, x^{n-k})$, on  the restriction of the metric tensor on $S$ to $K$ and on the $2$-jets over $K$ of the $\R^N$-valued smooth functions $f$, $\xi$ and $\hat e_1, \ldots, \hat e_k$, but not on $\gamma$.

First let $X = \partial_i$ and $Y = \partial_j$ for  $1 \leq i,j \leq n-k$.
Since $a$ and $b$ are norm bounded by $1$ (recall $\rho_0 \leq 1$) and $| \omega|_f = 1$, we obtain
\[
 | \alpha_{F_{\gamma}} (X_p, Y_p) |_{F_\gamma} \leq | \partial_i \partial_j F_{\gamma} (q , \omega, s) | \leq | \partial_i \partial_j f (q) | + |a(s)| | \partial_i \partial_j \hat \omega (q) | + |b(s)| | \partial_i \partial_j \xi (q) | \leq C .
\]
Together with  the bilinearity of $\alpha_{F_\gamma}$ this implies that for all $X ,Y \in T_q S$ we get
\[
     | \alpha_{F_{\gamma}} (X_p, Y_p) |_{F_\gamma} \leq C  |X|_f |Y|_f
\]
which is \eqref{deux}.

Next let $X = \partial_i$ for $1 \leq i \leq n-k$ and $V \in T_{\omega } \skmu = \omega^{\perp} \subset \R^k$, which we consider as a vector in $T_p\Sigma$ by \eqref{splitting}.
Note that $\partial_{V} F_{\gamma} (p) = a(s) \hat V (q)$.
Writing $V = \sum_{j=1}^k \alpha_j e_j$ with $\alpha_j \in \R$  this gives
\[
 | \alpha_{F_{\gamma}} (X_p,V ) |_{F_\gamma} \leq |a(s)| | \partial_i \hat V (q) | \leq  |a(s)| \sum_{j=1}^k | \alpha_j| |\partial_i \hat e_j(q)|  \leq C  |a(s)|   |V|_f =  C |V|_{F_\gamma} .
\]
Moreover, for $V = \partial_s \in T_s I$ we get, using that $\gamma$ is of unit speed and $|\omega|_f = 1$,  that
\[
 | \alpha_{F_{\gamma}} (X_p,V ) |_{F_\gamma} \leq | \partial_s \partial_i F_{\gamma}(q , \omega, s  ) | = |a' (s) | | \partial_i \hat \omega(q)  | + | b'(s)| | \partial_i \xi(q) | \leq C .
\]
For $ V \in \mathscr{V}_p \cong T_\omega \skmu \oplus^{\perp} T_s I $ the last two  estimates  imply
\[
    | \alpha_{F_{\gamma}} (X_p,V ) |_{F_\gamma} \leq C |V|_{F_\gamma}
\]
such that, for all $X \in  T_q S$ and $V \in \mathscr{V}_p$, we have
\[
      | \alpha_{F_{\gamma}} (X,V ) |_{F_\gamma} \leq C |X|_f |V|_{F_\gamma}
\]
which is \eqref{trois}.

For $X = \partial_i$, $1 \leq i \leq n-k$, we obtain
\[
    |d_pF_\gamma(X_p) - d_q f (X) | \leq  |a(s)| | \partial_i \hat \omega(q) | + | b(s)| | \partial_i \xi(q) |  \leq \rho C
\]
such that for all $X \in T_q S$ we get
\begin{eqnarray} \label{masterest}
    |d_pF_\gamma(X_p) - d_q f (X) |   \leq \rho C|X|_f  .
\end{eqnarray}
Since  $d_q f(X) \perp Z$ estimate \eqref{masterest} and the Cauchy-Schwarz inequality imply
\[
 \langle d_p F_{\gamma} (X_p) , Z \rangle \leq  |d_pF_\gamma(X_p) - d_q f (X) | |Z| \leq \rho C |X|_f |Z|
 \]
which is \eqref{quatre}.

Finally, since $d_q f(X) \perp d_p F_\gamma(\mathscr{V}_p)$ estimates \eqref{estvert} and \eqref{esthor} follow from \eqref{masterest} by projecting $d_pF_\gamma(X_p) - d_q f (X)$ onto $d_pF_\gamma(\mathscr{V}_p)
\subset \R^N$ and $d_pF_\gamma(\mathscr{V}_p)^{\perp} \subset \R^N$, respectively.
\end{proof}

If $\gamma$ is of unit speed we define the unit vector field $N \colon \Sigma \to S_1 \big(\NUMS \oplus \R \xi \big)$ by
\[
   N(q, \omega,s) := -b'(s) \omega + a'(s) \xi(q) \in \R^N ,
\]
and  decompose it into orthogonal summands as
\[
 N = N^\top+ N^\perp\ \in\ dF_{\gamma}(T\Sigma)\, \oplus^\perp \nu_{F_{\gamma}} = F_{\gamma}^*(T\R^N).
\]
For $q \in S$ we define $\Sigma_q := \pi^{-1}(q) = \{ q\} \times S_1(\NUMS)_q \times I \subset \Sigma$, and observe that $F_\gamma$ restricts to an embedding
\[
 \Sigma_q \hookrightarrow (\NUMS)_q \oplus \R \, \xi(q)
\]
whose image is the revolution hypersurface with meridian $\gamma$ and axis $\R \, \xi(q)$.
This embedding has $N_q (\omega, s) :=  -b'(s) \omega + a'(s) \xi(q) = N(p)$ as unit normal vector field, that is to say the Gauss map.

\begin{lem} \label{estnormal} There exists $0 < \rho \leq \rho_0$ such that, for all $\gamma $ of unit speed and of extent $\rho$, we have $|N^{\perp}|\geq 1/2$.
\end{lem}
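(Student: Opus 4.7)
The plan is to bound $|N^\top|$ directly and to use the fact that $|N|=1$. Since $\gamma$ is of unit speed and $\omega \perp \xi(q)$ with $|\omega|_f = |\xi(q)| = 1$, one immediately computes $|N|^2 = (a'(s))^2 + (b'(s))^2 = 1$, so it suffices to prove $|N^\top|^2 \leq 3/4$ once $\rho$ is small enough.

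The first observation I would make is that $N(p) \perp d_pF_\gamma(\mathscr{V}_p)$. Indeed, as already noted just after the definition of $N$, the restriction of $F_\gamma$ to the fiber $\Sigma_q$ embeds it as the revolution hypersurface in the $(k+1)$-plane $(\NUMS)_q \oplus \R\xi(q)$ with meridian $\gamma$ and axis $\R\xi(q)$, and a direct computation identifies $-b'(s)\omega + a'(s)\xi(q)$ as its unit Gauss map. Consequently, in the refined orthogonal decomposition $F_\gamma^*(T\R^N) = d_pF_\gamma(\mathscr{V}_p) \oplus^\perp d_pF_\gamma(\mathscr{H}_p) \oplus^\perp (\nu_{F_\gamma})_p$, the tangential component $N^\top$ lies entirely in $d_pF_\gamma(\mathscr{H}_p)$, so estimating $|N^\top|$ reduces to bounding the components of $N$ along a basis of $d_pF_\gamma(\mathscr{H}_p)$.

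Next, by compactness of $S$, finitely many compact subsets $K \subset S$ as in \lref{difficult} cover $S$, yielding a single uniform constant $C$ independent of $\gamma$ and $p$. For $p = (q,\omega,s)$ I would choose an $f$-orthonormal basis $X_1,\dots,X_{n-k}$ of $T_qS$ and consider the horizontal lifts $H_i := \mathscr{H}(X_i)_p$, which span $\mathscr{H}_p$. Combining $(X_i)_p = \mathscr{V}(X_i)_p + H_i$ with the master estimate $|d_pF_\gamma(X_i)_p - d_qf(X_i)| \leq \rho C |X_i|_f$ appearing in the proof of \lref{difficult} and the $\mathscr{V}$-$\mathscr{H}$ orthogonality, the Gram matrix of the family $\{d_pF_\gamma(H_i)\}$ differs from the identity by $O(\rho)$ and hence is invertible with uniformly controlled inverse for $\rho$ small. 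Simultaneously, applying \eqref{quatre} with $Z := N$ (permissible since $N \in (\NUMS)_q \oplus \R\xi(q)$ and $|N|=1$), together with $N \perp d_pF_\gamma(\mathscr{V}_p)$, gives $|\langle d_pF_\gamma(H_i), N\rangle| = |\langle d_pF_\gamma((X_i)_p), N\rangle| \leq \rho C$.

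Expanding $N^\top = \sum_i c_i\, d_pF_\gamma(H_i)$ and solving the resulting linear system then yields coefficients of size $|c_i| = O(\rho)$ and hence $|N^\top|^2 = O(\rho^2)$, uniformly in $\gamma$. Choosing $\rho$ small enough makes this at most $3/4$, giving $|N^\perp|^2 = 1 - |N^\top|^2 \geq 1/4$. I expect the only real subtlety to be ensuring that all constants are uniform in $\gamma$ and in $p \in \Sigma$, which is precisely the point of how \lref{difficult} is formulated; the rest is elementary linear algebra combined with the geometric identification of $N$ as the Gauss map of the fibers.
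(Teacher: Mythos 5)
Your proof is correct and uses the same key ingredients as the paper: the orthogonality $N(p)\perp d_pF_\gamma(\mathscr{V}_p)$, estimate \eqref{quatre} applied with $Z=N$, the master estimate $|d_pF_\gamma(X_p)-d_qf(X)|\leq\rho C|X|_f$ from \lref{difficult}, and a finite cover of $S$ by compacta $K$ to get a uniform constant. The paper reaches the same bound a little more directly by writing $|N^\top(p)|=\max\{\langle d_pF_\gamma(X_p),N(p)\rangle : X\in T_qS,\ |\mathscr{H}X_p|_{F_\gamma}=1\}$ and applying \eqref{quatre} and \eqref{esthor} to the maximizer, thereby avoiding the choice of basis and the Gram-matrix inversion; but this is only a streamlining, not a different idea.
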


\begin{proof}
We first work in the setting of Lemma \ref{difficult}.
For $p = (q, \omega, s) \in \Sigma$ with $q \in K$ we get  $N(p) \perp dF_{\gamma} (\mathscr{V}_p)$ and hence
\begin{equation*} \label{esttan}
 |N^\top(p)| = \max_{X \in T_q S, | \mathscr{H}X_p |_{F_\gamma} = 1}    \langle dF_{\gamma}( X_p) , N(p) \rangle  .
\end{equation*}
Pick $X \in T_q S$ for which this maximum is attained.
By \eqref{quatre} and \eqref{esthor} we have $\langle dF_{\gamma} (X_p) , N(p) \rangle \leq \rho C | X |_f$ and $( 1 - \rho C) | X |_f \leq | \mathscr{H} X_p|_{F_\gamma} = 1$.
We therefore find $0 < \rho \leq \rho_0$ such that for all $\gamma$ of extent $\rho$ and all such $p$ we have $|N^\top(p)|\leq 1/2$, and hence $| N^{\perp}(p) | \geq 1/2$.

Since the compact manifold $S$ can be covered by finitely many $K$ to which Lemma \ref{difficult} applies, the assertion of \lref{estnormal} follows.
\end{proof}

Given a unit speed curve $\gamma = (a,b) \colon I \to \R^2$ satisfying $a(s) \neq 0$ for all $s \in I$ we define the smooth functions $\kappa, \sigma\colon I \to \R$ by
\begin{equation} \label{kappasigma}
 \kappa := a' \, b'' - a'' \, b' \, , \quad \sigma := b' / a \, .
\end{equation}
Notice that $\kappa$ is the curvature of $\gamma$ with respect to its unit normal $(-b', a') \in \R^2$.

\begin{lem} \label{vertical} With respect to the direct sum decomposition $\mathscr{V}_p = \omega^{\perp} \oplus  {\rm span} \{ \partial_s \}$ we obtain
\[
 (\alpha_{F_{\gamma}} )|_{\mathscr{V}_p \times \mathscr{V}_p } = \big( \sigma(s) \, \langle \,\cdot\,, \cdot \,\rangle_{\omega^{\perp} } + \kappa(s) \, ds^2\big) \, N^{\perp}(p) \, .
\]
\end{lem}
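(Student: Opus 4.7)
The plan exploits the observation made just before the statement: $F_\gamma$ restricts on each fibre $\Sigma_q$ to an embedding onto a hypersurface of revolution with meridian $\gamma$ in the affine $(k+1)$-plane $P_q := f(q) + (\NUMS)_q \oplus \R\xi(q)\subset \R^N$, with unit normal $N$. The formula in the statement will then emerge as the classical second fundamental form of such a hypersurface of revolution, tensored with $N^{\perp}$ to account for the projection from $P_q$ down onto $\nu_{F_\gamma}$.

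Concretely, I would fix $q\in S$ and compute the three types of second derivatives separately. For $V\in\omega^\perp$, extending $V$ along a geodesic $\omega(\theta)$ on $S_1(\NUMS)_q$ with $\omega'(0)=V$ gives $\omega''(0) = -|V|_f^2\,\omega$ (centripetal acceleration on the unit sphere), so
\begin{align*}
\partial_V \partial_V F_\gamma &= a(s)\,\omega''(0) = -a\,|V|_f^2\,\omega, \\
\partial_s^2 F_\gamma &= a''\omega + b''\xi,\\
\partial_V \partial_s F_\gamma &= a'(s)\,V.
\end{align*}
All three are vectors in $(\NUMS)_q\oplus \R\xi(q)\subset\R^N$, so I would expand them in the orthonormal frame $\{T,N\}$ of the meridian plane $\spann\{\omega,\xi\}$, where $T := a'\omega + b'\xi = dF_\gamma(\partial_s)$ by the unit-speed condition. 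Inverting yields $\omega = a'T - b'N$ and $\xi = b'T + a'N$, and the unit-speed identity $a'a''+b'b''=0$ then gives
\begin{align*}
\partial_V \partial_V F_\gamma &= -aa'\,|V|_f^2\,T + ab'\,|V|_f^2\,N, \\
\partial_s^2 F_\gamma &= \kappa\,N,\\
\partial_V \partial_s F_\gamma &= a'\,V.
\end{align*}

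To conclude, I project onto $\nu_{F_\gamma}$. Both $T = dF_\gamma(\partial_s)$ and $V = a^{-1} dF_\gamma(V)$ are tangent to $F_\gamma$, so their contributions drop out; of the $N$-components only the $N^\perp$-parts survive. Since $\langle\cdot,\cdot\rangle_{\omega^\perp}$ denotes the $F_\gamma$-induced metric on $\omega^\perp$ we have $\langle V,V\rangle_{\omega^\perp} = a^2 |V|_f^2$, and hence $\sigma(s)\,\langle V,V\rangle_{\omega^\perp} = (b'/a)\cdot a^2|V|_f^2 = ab'|V|_f^2$, which matches $\alpha_{F_\gamma}(V,V) = ab'|V|_f^2\,N^\perp$. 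Combined with $\kappa(s)\,ds^2(\partial_s,\partial_s) = \kappa$ and the vanishing cross term, polarization then yields the asserted formula.

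The main obstacle is the bookkeeping in the projection step: one has to recognize that $V$, viewed as an element of $(\NUMS)_q\subset \R^N$ at the point $F_\gamma(p)$, is already tangential to $F_\gamma$ via the identification $dF_\gamma(V) = a(s)\,V$, so that both the $N^\top$-contribution of $\partial_V\partial_V F_\gamma$ and the entire $\partial_V\partial_s F_\gamma$-term vanish upon projection. Once the frame $\{T,N\}$ of the meridian plane is in place, everything else is a short symbolic manipulation.
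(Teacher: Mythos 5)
Your proof is correct and follows essentially the same route as the paper: the paper's proof simply cites the classical ``direct computation'' of the second fundamental form of the revolution hypersurface $\Sigma_q \hookrightarrow (\NUMS)_q \oplus \R\,\xi(q)$ and then replaces $N$ by $N^{\perp}$ upon projecting to $\nu_{F_\gamma}$, which is exactly what your explicit calculation in the frame $\{T,N\}$ carries out. Your reading of $\langle\,\cdot\,,\cdot\,\rangle_{\omega^\perp}$ as the $F_\gamma$-induced metric is the intended one, as confirmed by the trace computation in the proof of \pref{main_est}.
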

\begin{proof} A direct computation shows that the second fundamental form $ \alpha_q$ of the embedding $\Sigma_q \hookrightarrow (\NUMS)_q \oplus \R \, \xi(q)$ is given, with respect to the orthogonal decomposition $T_{(\omega, s)} \Sigma_q = \omega^{\perp} \oplus {\rm span} \{ \partial_s \}$, by
\[
 \alpha_q = \big( \sigma(s) \, \langle \,\cdot\,, \cdot\, \rangle_{\omega^{\perp} } + \kappa(s) \, ds^2\big) \, N(p) \, .
\]
The assertion now follows from the definition of $N^{\perp}(p)$.
\end{proof}

\begin{dfn} A smooth curve $\gamma\colon I \to \R^2$ is called {\em controlled}, if it is of unit speed, $a(s) \neq 0$ for all $s \in I$ and $\frac{2-k}{4} \, \sigma \leq \kappa \leq \sigma$ on $I$.
(Recall that $k$ is the codimension of $S$ in $M$.)
\end{dfn}

\begin{rem} \label{contr} If $\gamma$ is controlled, we have $\sigma \geq 0$ and $\max \{ |\kappa| , \sigma \} \leq n \sigma$.
\end{rem}

For $p = (q, \omega, s) \in \Sigma$ and $X \in T_q S$ we denote by $\mathscr{H}_p X \in \mathscr{H}_p$ the unique horizontal tangent vector satisfying $d_p \pi(\mathscr{H}_p X) = X$.
Note that whenever we work in a decomposition \eqref{splitting}, we have $\mathscr{H}_p X = \mathscr{H}X_p$.
In particular the horizontal component of $X_p$ is independent from the chosen bundle trivialization $\Psi$.

\begin{lem} \label{estsecondfund} There exist constants $C \geq 0$ and $0 < \rho \leq \rho_0$ with the following property:
If $\gamma$ is controlled and of extent $\rho$, then, for all $p = (q, \omega, s) \in \Sigma$, $X, Y \in T_q S$ and $V \in \mathscr{V}_p$, we have
\begin{eqnarray}
 \label{oans}
 \big| \alpha_{F_{\gamma}} (\mathscr{H}_p X, \mathscr{H}_pY ) \big|_{F_{\gamma} } & \leq & C \big(1 + \rho \sigma \big) \, |X|_f \, |Y|_f \, , \\
 \label{zwoa}
 \big| \alpha_{F_{\gamma}} (\mathscr{H}_p X, V) \big|_{F_\gamma} & \leq & C \big( 1 + \rho \sigma \big) \, |X|_f \, |V|_{F_\gamma} \, .
\end{eqnarray}
 \end{lem}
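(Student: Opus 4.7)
\medskip

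\noindent\textbf{Proof plan for Lemma \ref{estsecondfund}.} The plan is to work in a local trivialization as in \lref{difficult}, where both horizontal and vertical components of $X_p$, $Y_p$ are well-defined. Since $\mathscr{H}_pX = X_p - \mathscr{V}X_p$ (and similarly for $Y$), I would expand the bilinear form $\alpha_{F_\gamma}$ as
\[
 \alpha_{F_\gamma}(\mathscr{H}_pX, \mathscr{H}_pY) = \alpha_{F_\gamma}(X_p,Y_p) - \alpha_{F_\gamma}(X_p, \mathscr{V}Y_p) - \alpha_{F_\gamma}(\mathscr{V}X_p, Y_p) + \alpha_{F_\gamma}(\mathscr{V}X_p, \mathscr{V}Y_p),
\]
and estimate each of the four summands separately using \lref{difficult} and \lref{vertical}. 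The first three mixed terms are $O(1)$ in $\sigma$: specifically, \eqref{deux} bounds the first by $C|X|_f|Y|_f$, and the two cross terms are bounded via \eqref{trois} combined with \eqref{estvert}, giving contributions of order $\rho C^2 |X|_f |Y|_f$.

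The fourth summand is the genuinely new one, as it involves the vertical-vertical block of $\alpha_{F_\gamma}$. Here I would invoke \lref{vertical}, which identifies this block with the tensor $\sigma(s)\langle\cdot,\cdot\rangle_{\omega^\perp} + \kappa(s)\,ds^2$ multiplied by $N^\perp(p)$. Since $|N^\perp|\le 1$ and, by \rref{contr}, $\max\{|\kappa|,\sigma\} \leq n\sigma$ when $\gamma$ is controlled, this block has operator norm at most $n\sigma$. Combining with \eqref{estvert} to bound the vertical projections of $X_p$ and $Y_p$, I obtain
\[
 \bigl|\alpha_{F_\gamma}(\mathscr{V}X_p, \mathscr{V}Y_p)\bigr|_{F_\gamma} \leq n\sigma \cdot (\rho C)^2 |X|_f |Y|_f \leq \rho\sigma\, C'\, |X|_f |Y|_f,
\]
where the last step uses $\rho \leq \rho_0 \leq 1$. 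Summing the four contributions yields \eqref{oans} with a possibly enlarged constant.

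For \eqref{zwoa} the same strategy works, but simpler: only the expansion
\[
 \alpha_{F_\gamma}(\mathscr{H}_pX, V) = \alpha_{F_\gamma}(X_p, V) - \alpha_{F_\gamma}(\mathscr{V}X_p, V)
\]
is required. The first term is bounded by $C|X|_f |V|_{F_\gamma}$ via \eqref{trois}, and the second term is purely vertical, so \lref{vertical} together with \eqref{estvert} and the controlled bound $\max\{|\kappa|,\sigma\}\le n\sigma$ yields a bound of order $\rho\sigma |X|_f |V|_{F_\gamma}$. Finally, since $S$ is compact, it is covered by finitely many patches $K$ as in \lref{difficult}, so we may choose a single $\rho$ and a single $C$ uniformly. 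The main obstacle, which is really only a bookkeeping one, is to make sure that when we shrink $\rho$ to accommodate \lref{estnormal} and each local trivialization, the constants can be uniformly combined; this is harmless because the estimates \eqref{deux}--\eqref{esthor} are uniform in $\gamma$.
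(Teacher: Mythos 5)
Your proposal is correct and follows essentially the same route as the paper: decompose $\mathscr{H}_pX = X_p - \mathscr{V}X_p$, expand $\alpha_{F_\gamma}$ bilinearly, bound the tangential and mixed terms by \eqref{deux}, \eqref{trois}, \eqref{estvert}, and bound the vertical-vertical term by \lref{vertical} together with \rref{contr}, then sum. The only cosmetic difference is that the paper collapses the four-term sum into a single chain of inequalities and absorbs constants as $3nC^3$, while you track the terms individually; the brief remark about \lref{estnormal} is a harmless tangent, since that lemma is not actually invoked here.
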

\begin{proof}
Again it is enough to work in the setting of Lemma \ref{difficult}.
In the following we replace the constant $C$ appearing in Lemma \ref{difficult} by $\max \{ C, 1 \}$.

By \eqref{deux}, \eqref{trois}, \eqref{estvert}, Lemma \ref{vertical}  and Remark \ref{contr} we obtain, using  $\mathscr{H}_p X = X_p - \mathscr{V} X_p$, $\mathscr{H}_p Y = Y_p - \mathscr{V} Y_p$ and  $0 < \rho \leq 1$,
\begin{align*}
 \big| \alpha_{F_{\gamma}} (\mathscr{H}_p X, \mathscr{H}_p Y) \big|_{F_\gamma} & \leq C \left( |X|_f |Y|_f +  |X|_f |\mathscr{V} Y_p|_{F_\gamma} + |\mathscr{V} X_p|_{F_\gamma} |Y|_f + n\sigma |\mathscr{V} X_p|_{F_\gamma} | \mathscr{V} Y_p |_{F_\gamma}\right) \\
    & \leq C \left( 1 + 2\rho C + n\sigma \rho^2 C^2 \right) |X|_f |Y|_f \\
    & \leq C^3 \left(  1  + 2  + n \rho \sigma \right) |X|_f |Y|_f .
\end{align*}
Hence we get \eqref{oans} with $C$ replaced by $3 nC^3$.
Estimate \eqref{zwoa} is implied in an analogous fashion by
\[
 \big| \alpha_{F_{\gamma}} (\mathscr{H}_p X, V ) \big|_{F_\gamma} \leq C\big( |X|_f |V|_{F_{\gamma}} + n \sigma |\mathscr{V} X_p|_{F_\gamma} |V|_{F_{\gamma}}\big) \leq C( 1 + n \sigma \rho C) |X|_f |V|_{F_\gamma} \, .
\]
\vskip -0.7cm\end{proof}\vskip 0.01cm

\begin{prop} \label{main_est} Assume $k \geq 3$.
Then  there exist constants $C \geq 0$ and $0 < \rho \leq \rho_0$ with the following property:
If $\gamma$ is controlled and of extent $\rho$, then
\begin{equation} \label{scalest}
 \scal_{F_\gamma} \geq \frac{(k-1)( k-2)}{16} \sigma^2 - C \sigma - C \, .
\end{equation}
In particular, there exists a constant $\sigma_0 > 0$ such that $F_\gamma$ is scalar positive for all such $\gamma$ satisfying   $\sigma \geq \sigma_0$.
\end{prop}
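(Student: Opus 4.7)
The plan is to apply the Gauss equation $\scal_{F_\gamma} = |\tr(\alpha_{F_\gamma})|^2 - |\alpha_{F_\gamma}|^2$ with respect to the orthogonal splitting $T_p\Sigma = \mathscr{H}_p \oplus \omega^\perp \oplus \mathrm{span}\{\partial_s\}$ at a point $p = (q,\omega,s) \in \Sigma$. At each such $p$ I would pick an $F_\gamma$-orthonormal basis $\{e_1,\ldots,e_{n-k}\}$ of $\mathscr{H}_p$, an orthonormal basis $\{f_1,\ldots,f_{k-1}\}$ of $\omega^\perp \subset (\NUMS)_q$, and use the unit vector $\partial_s$ as the last basis vector. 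Relative to this frame, $\alpha_{F_\gamma}$ decomposes into horizontal-horizontal, horizontal-vertical, and vertical-vertical blocks that I denote by $A_H$, $A_{HV}$, and $A_V$.

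The vertical block is completely explicit by \lref{vertical}: $A_V(f_i,f_j) = \sigma\,\delta_{ij}\,N^\perp$, $A_V(f_i,\partial_s) = 0$, and $A_V(\partial_s,\partial_s) = \kappa\,N^\perp$, which gives $\tr A_V = ((k-1)\sigma + \kappa)\,N^\perp$ and $|A_V|^2 = ((k-1)\sigma^2 + \kappa^2)\,|N^\perp|^2$. \lref{estsecondfund} meanwhile provides dimensional constants $C_1, C_2$ with $|\tr A_H| \leq C_1(1+\rho\sigma)$ and $|A_H|^2 + 2|A_{HV}|^2 \leq C_2(1+\rho\sigma)^2$. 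Expanding $|\tr\alpha_{F_\gamma}|^2 = |\tr A_V + \tr A_H|^2 \geq |\tr A_V|^2 - 2|\tr A_V|\,|\tr A_H|$ and $|\alpha_{F_\gamma}|^2 = |A_V|^2 + 2|A_{HV}|^2 + |A_H|^2$, and bounding $|\tr A_V|$ by a multiple of $\sigma$, the Gauss equation then yields a constant $C_3$ independent of $\gamma$ such that
\[
 \scal_{F_\gamma} \geq \bigl[((k-1)\sigma+\kappa)^2 - ((k-1)\sigma^2+\kappa^2)\bigr]\,|N^\perp|^2 - C_3(\sigma + 1) - C_3\,\rho\,\sigma^2.
\]

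The next step is the algebraic identity $((k-1)\sigma+\kappa)^2 - ((k-1)\sigma^2+\kappa^2) = (k-1)(k-2)\sigma^2 + 2(k-1)\sigma\kappa$, combined with the controlled hypothesis $\kappa \geq \frac{2-k}{4}\sigma$, which lower-bounds the bracket by $\frac{(k-1)(k-2)}{2}\sigma^2$, non-negative precisely because $k \geq 3$. \lref{estnormal} then supplies $|N^\perp|^2 \geq 1/4$, so the leading $\sigma^2$-coefficient before correction is at least $\frac{(k-1)(k-2)}{8}$. Finally, shrinking $\rho$ so that $C_3\,\rho \leq \frac{(k-1)(k-2)}{16}$ absorbs the $C_3\,\rho\sigma^2$ correction into half of this leading term, producing the announced bound $\scal_{F_\gamma} \geq \frac{(k-1)(k-2)}{16}\sigma^2 - C\sigma - C$. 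The in-particular statement is then immediate by taking $\sigma_0$ larger than the positive root of this quadratic.

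The main obstacle will be organising the horizontal and mixed blocks precisely enough that their contribution to $\scal_{F_\gamma}$ amounts to at worst a $\rho\sigma^2$ correction that $\rho$-smallness can dominate, rather than an unsuppressed $\sigma^2$ term which would spoil the leading positivity. This is exactly where the tunable factor $\rho$ inside the bounds $C(1+\rho\sigma)$ of \lref{estsecondfund} becomes crucial: squaring produces a $\rho^2\sigma^2$ term, while the cross term $2\langle\tr A_V,\tr A_H\rangle$ produces only a $\rho\sigma^2$ term, both dominated by the leading vertical contribution once $\rho$ is chosen small relative to the dimensional data.
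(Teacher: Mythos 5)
Your proposal is correct and follows essentially the same route as the paper's proof: both decompose $\alpha_{F_\gamma}$ into vertical, mixed and horizontal blocks with respect to $\mathscr{V}_p \oplus \mathscr{H}_p$, read off the vertical block from \lref{vertical}, control the remaining blocks via \lref{estsecondfund} together with \eqref{esthor} and \lref{estnormal}, use the controlled condition $\kappa \geq \frac{2-k}{4}\sigma$ (and $\sigma \geq 0$) to extract the leading term $\frac{(k-1)(k-2)}{8}\sigma^2$, and finally shrink $\rho$ so that the $\rho\sigma^2$ correction is absorbed into half of it. The only differences are cosmetic (the paper bounds matrix entries rather than block norms and retains the harmless $|\tr(Q)|^2$ term), so no further changes are needed.
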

\begin{proof}
With respect to the orthogonal direct sum decomposition $T_p \Sigma = \mathscr{V}_p \oplus \mathscr{H}_p$ write
\[
 \alpha_{F_{\gamma}} = \left( \begin{array}{cc} \Delta & B \\ B^T & Q \end{array} \right) \, ,
\]
where $\Delta := (\alpha_{F_{\gamma}} )|_{\mathscr{V}_p \times \mathscr{V}_p }$ was computed in \lref{vertical}.
The Gauss equation  hence implies that
 \begin{align*}
 \scal_{F_\gamma} (p) & = | \tr( \alpha_{F_{\gamma}} ) |^2 - | \alpha_{F_{\gamma}} |^2 \\
 & = \big|\tr( \Delta) + \tr(Q)\big|^2 - |\Delta|^2 - 2 |B|^2 - |Q|^2 \\
 & \geq \left( |\tr(\Delta)|^2 - |\Delta|^2\right) - 2|\tr(\Delta)||\tr(Q)| + |\tr(Q)|^2 - 2|B|^2 - |Q|^2 \, .
 \end{align*}
By \eqref{esthor} and \lref{estnormal} we find $0 < \rho \leq \rho_0$ such that, for all $\gamma$ of extent $\rho$, $p = (q, \omega, s) \in \Sigma$ and $X \in T_q X$, we have
\begin{equation} \label{standest}
 | \mathscr{H}_pX |_{F_\gamma} \geq |X|_f/2 \, , \quad 1/2 \leq | N^{\perp}(p)| \leq 1 \, .
\end{equation}
Since $\gamma$ is controlled, we get $2 \kappa \geq - \frac{k-2}{2} \, \sigma$ and hence
\[
 |\tr(\Delta)|^2 - |\Delta|^2 = (k-1) ( (k-2) \sigma^2 + 2 \kappa \sigma ) |N^{\perp}(p)|^2 \geq \frac{(k-1)(k-2)}{8} \sigma^2 \, .
\]
Using \eqref{standest}, Remark \ref{contr} and \lref{estsecondfund} we see that the entries of $Q \in ( \R^{N})^{(n-k) \times (n-k)}$ are norm bounded by $ 4 C ( 1+ \rho\sigma) $, the ones for $B \in ( \R^{N})^{k \times (n-k)}$ are norm bounded by $ 2 C ( 1 + \rho \sigma)$ and the ones for $\Delta$ are norm bounded by $n\sigma$.
Hence $2 |\tr(\Delta)||\tr(Q)| + 2|B|^2 + |Q|^2$ is bounded by a quadratic polynomial in $\sigma$, and passing to a smaller $\rho$ we can assume that the coefficient of $\sigma^2$ is bounded by $ \frac{(k-1)(k-2)}{16}$, which is positive as $k \geq 3$.
This completes the proof of \pref{main_est} for an appropriate $C$.
\end{proof}

\begin{lem} \label{bc} Let $k \geq 3$, $(x,y) \in \R^2$ with $x > 0$ and $(u, v) \in S^1$ with $u, v > 0$.
Set $\lambda := \frac{k-2}{4} > 0 $.
Then there exists $- \frac{\pi \, x}{2\lambda v} < R < 0 $ and a unit speed curve $\gamma = (a,b) \colon [R,0] \to \R^2$ with the following properties:
\begin{enumerate}[(i)]
\item \label{xx} $\gamma(0)=(x, y)$ and $a(s) > 0$ for all $s \in [R,0]$;
\item \label{aa} $\gamma'(R)=(0, 1)$ and $\gamma'(0) = (u,v)$.
 In particular $\sigma(0) = v/x$;
\item \label{bb} $\kappa = - \lambda\, \sigma$ with $\kappa$ and $\sigma$ as in \eqref{kappasigma}.
\end{enumerate}
\end{lem}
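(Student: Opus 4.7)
The plan is to reparametrize a prospective unit speed solution by its turning angle. Writing $\gamma'(s) = (\cos\theta(s), \sin\theta(s))$ gives $a' = \cos\theta$ and $b' = \sin\theta$, and a direct computation yields $\kappa = \theta'$ and $\sigma = \sin\theta/a$. The constraint (iii), $\kappa = -\lambda \sigma$, then becomes the first-order ODE system
\[
 a'(s) = \cos\theta(s), \qquad \theta'(s) = -\lambda\,\frac{\sin\theta(s)}{a(s)},
\]
to be solved backwards from $s=0$ with $a(0) = x > 0$ and $\theta(0) = \theta_0 \in (0,\pi/2)$ chosen so that $(\cos\theta_0, \sin\theta_0) = (u,v)$. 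The $b$-component is then reconstructed by integrating $b' = \sin\theta$ with $b(0) = y$.

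The central observation will be that $a^{\lambda}\sin\theta$ is a first integral of the system, as a one-line differentiation check reveals. Setting $c := x^{\lambda} v$, this conserved quantity yields the explicit dependence
\[
 a(\theta) = x\left(\frac{v}{\sin\theta}\right)^{1/\lambda},
\]
which in particular shows that $a$ remains bounded below by $x v^{1/\lambda} > 0$ throughout the backwards evolution and so cannot blow up or vanish. Since $\theta' < 0$ and $a' > 0$ on $\theta \in (0,\pi/2)$, the function $\theta$ increases monotonically towards $\pi/2$ as $s$ decreases while $a$ decreases monotonically towards $x v^{1/\lambda}$; this will guarantee property (i), and property (ii) at $s=0$ holds by construction.

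To produce $R$, I will invert the relation $ds/d\theta = -a/(\lambda\sin\theta)$, which reduces the endpoint condition $\theta(R) = \pi/2$ to the explicit integral
\[
 |R| = \int_{\theta_0}^{\pi/2} \frac{a(\theta)}{\lambda\sin\theta}\,d\theta = \frac{x v^{1/\lambda}}{\lambda}\int_{\theta_0}^{\pi/2} (\sin\theta)^{-1-1/\lambda}\,d\theta.
\]
Since $\sin$ is increasing on $[0,\pi/2]$, on the interval of integration $\sin\theta \geq \sin\theta_0 = v$, and the integrand is pointwise bounded by $v^{-1-1/\lambda}$. This furnishes the estimate $|R| \leq x(\pi/2 - \theta_0)/(\lambda v) < \pi x/(2\lambda v)$, which is precisely the required upper bound and simultaneously proves finiteness of $R$. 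At the resulting endpoint $\theta(R) = \pi/2$, so $\gamma'(R) = (0,1)$ as required by (ii).

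The only delicate point I expect is verifying that the ODE solution really extends backwards all the way to $\theta = \pi/2$. However, because the conserved quantity forces $a \geq x v^{1/\lambda}$ along the flow, the right-hand side of the system stays smooth and uniformly bounded on the relevant region of the $(a,\theta)$-plane, so standard continuation theory ensures that the maximal backwards interval of existence contains the level set $\theta = \pi/2$. Everything else in (i)--(iii) is then immediate from the construction.
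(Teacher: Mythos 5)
Your proof is correct and follows essentially the same route as the paper: both arguments hinge on the conserved quantity $b'a^{\lambda}=a^{\lambda}\sin\theta$ to bound $a$ from below and on the resulting bound $|\theta'|\geq \lambda v/x$ (equivalently, your integrand bound $a/(\lambda\sin\theta)\leq x/(\lambda v)$) to control $R$. The only difference is presentational — you work with the first-order system in $(a,\theta)$ and integrate explicitly, whereas the paper solves the second-order system \eqref{ODE} for $(a,b)$ and introduces the angle $\theta$ afterwards.
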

\begin{proof}
Consider a maximal solution $\gamma \colon I \to \R^2$, $\gamma(s)=(a(s), b(s))$, $0 \in I \subset \R$, of the system of second order nonlinear ordinary differential equations
\begin{equation}\label{ODE}
\left(\begin{array}{c}a''\\b''\end{array}\right)=-\lambda\, \frac{b'}{a}\left(\begin{array}{c}-b'\\a'\end{array}\right)
\end{equation}
with initial conditions $\gamma(0) = (x,y)$ and $ \gamma'(0) = (u, v)$.
Then $\langle \gamma' , \gamma'' \rangle = 0$ and hence $\gamma$ has unit speed.
Furthermore, the quantity $z(s) \colon=b'(s)a(s)^{\lambda}$ is preserved along $ \gamma$ since
\[
z' = b'' a^{\lambda}+ \lambda b' a^{\lambda - 1} a' =- \lambda  \frac{b'}{a}  a' a^\lambda+\lambda b' a' a^{\lambda-1} = 0 \, .
\]
Therefore $z(s) = z(0) > 0$ for all $s$.
Since $a(0) = x > 0$ and $|b'| \leq 1$ this implies that $a$ is bounded below by a constant $C > 0$, and hence $b' > 0$ on $I$.
In particular  $(a, a', b')$ stays in $[C, \infty) \times [-1, 1] \times [0 ,1]$ and hence $I = \R$.

As $b' > 0$ we obtain a continuous function $\theta \colon \R \to (0,\pi)$ which measures the angle in counterclockwise direction between $(1,0) \in \R^2$ and $ \gamma'(s)$, that is, $\cos(\theta)= a'$ and $\sin(\theta)=b'$.
Moreover, we have
\[
 \theta'= \kappa= - \lambda \sigma \, ,
\]
where the second equality uses the Frenet equation and \eqref{ODE}.
Since $z$ is constant along $ \gamma$ and $b' > 0$, we know that $b'$ and hence $\sigma = b' / a$ are decreasing on the subset $\{a'>0\} \subset \R$.
Combining this with $\sigma(0)= b'(0)/a(0)= v/x$ we conclude that $\theta' \leq - \frac{\lambda v}{x}$ on the maximal interval $(R,0]$, $R < 0$, on which $a'>0$.

Since $a'(0) = u > 0$ and $\theta(0)\in(0,\pi/2)$, we get $-\frac{\pi \, x}{2\lambda v} < R < 0$ and $\theta(R) = \pi/2$.
This implies $a'(R) = 0$ and hence $b'(R) = 1$ since $\gamma$ has unit speed.
\end{proof}

\begin{rem} This proof is inspired by \cite{ef}*{Lemma 3.14}, but we preferred to solve a differential equation for $\gamma$ instead of writing $a= h(b)$ and solving a differential equation for $h$.
\end{rem}

\begin{cor} \label{bendingcurve} Let $k \geq 3$, let  $0 < \rho \leq \rho_0$ and $\sigma_0> 0$ be chosen as in \pref{main_est} and let $0 < \rho' \leq \rho/2$.
Let $(x,y) \in \R^2$ with $x > 0$  and $(u,v) \in S^1$ with $u, v > 0$ satisfying $|(x,y)| \leq \rho'$ and $v/x \geq\max \{ \sigma_0, \frac{\pi}{2 \lambda \rho'} \}$ where $\lambda = \frac{k-2}{4}$.
Then the curve $\gamma \colon [R,0] \to \R^2$ constructed in \lref{bc} is controlled and of extent $2\rho' \leq \rho$.
Moreover, $\scal_{F_{\gamma}} > 0$ on $\Sigma_{\gamma}$.
\end{cor}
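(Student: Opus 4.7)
The plan is to verify the three claims of the corollary in order by assembling the hypotheses of \pref{main_est} and then reading off scalar positivity. The key inputs are the construction in \lref{bc}, in particular the conservation law $b'\,a^{\lambda}\equiv v\,x^{\lambda}$, and the estimate in \pref{main_est}.

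First I would verify that $\gamma$ is controlled. Unit speed and $a > 0$ come directly from \lref{bc}. The constraint $\kappa = -\lambda\sigma$ with $\lambda = \frac{k-2}{4}$ means the lower inequality $\frac{2-k}{4}\sigma \leq \kappa$ holds with equality, so only $\kappa \leq \sigma$ needs checking. From the proof of \lref{bc} we have $b' > 0$ on $[R,0]$, and since $a > 0$ we get $\sigma = b'/a > 0$; combined with $\kappa = -\lambda\sigma \leq 0$, this gives $\kappa \leq 0 \leq \sigma$.

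Second I would verify the extent bound. Since $\gamma$ has unit speed,
\[
 |\gamma(s) - \gamma(0)| \leq |s| \leq |R| < \frac{\pi\,x}{2\lambda v}
\]
for every $s \in [R,0]$, where the last inequality is the upper bound on $R$ in \lref{bc}. The hypothesis $v/x \geq \pi/(2\lambda\rho')$ forces $\frac{\pi x}{2\lambda v} \leq \rho'$. Combining with $|\gamma(0)| = |(x,y)| \leq \rho'$, the triangle inequality gives $|\gamma(s)| < 2\rho' \leq \rho$, i.e.\ $\gamma$ has extent $2\rho'$.

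Third, and this is the crucial step, I would show $\sigma(s) \geq \sigma_0$ throughout $[R,0]$ in order to apply \pref{main_est}. The conservation law $z(s) = b'(s)\,a(s)^{\lambda}$ established in the proof of \lref{bc} yields $b'(s) = v\,x^{\lambda}/a(s)^{\lambda}$, hence
\[
 \sigma(s) = \frac{b'(s)}{a(s)} = \frac{v\,x^{\lambda}}{a(s)^{\lambda+1}}.
\]
On $[R,0]$ we have $a' > 0$ by the choice of $R$, so $a$ is increasing and $a(s) \leq a(0) = x$ for every $s \in [R,0]$. Therefore $\sigma(s) \geq v/x = \sigma(0) \geq \sigma_0$ throughout. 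Since $\gamma$ is controlled and of extent $\leq \rho$, \pref{main_est} applies, and because $\sigma \geq \sigma_0$ everywhere on $[R,0]$ the resulting lower bound on $\scal_{F_\gamma}$ is strictly positive, establishing the final claim.

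The only step with any real content is the third, where the conservation law plus the monotonicity of $a$ on $[R,0]$ transfers the pointwise hypothesis $v/x \geq \sigma_0$ at $s=0$ to a uniform lower bound on $\sigma$; the other two items are direct checks against the definitions and the construction in \lref{bc}.
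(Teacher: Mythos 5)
Your proposal is correct and follows essentially the same route as the paper: check controlledness via $\kappa=-\lambda\sigma$ and $\sigma>0$, get the extent from $|\gamma(0)|\leq\rho'$, unit speed and $|R|<\frac{\pi x}{2\lambda v}\leq\rho'$, and reduce positivity to \pref{main_est} by showing $\sigma\geq\sigma(0)=v/x\geq\sigma_0$ on $[R,0]$. Your explicit formula $\sigma(s)=v\,x^{\lambda}/a(s)^{\lambda+1}$ from the conserved quantity is just an unpacked version of the monotonicity of $\sigma$ on $(R,0]$ that the paper cites from the proof of \lref{bc}, so there is no substantive difference.
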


\begin{proof}
The curve $\gamma$ is controlled by \lref{bc} \eqref{bb} and of extent $2 \rho'$ since $|\gamma(0)| \leq \rho'$, $|R| < \frac{\pi x}{2 \lambda v} \leq \rho'$ and $\gamma$ is of unit speed.
It follows from the proof of \lref{bc} that $\sigma$ is decreasing on $(R,0]$.
As $\sigma(0) = v/x \geq \sigma_0$, this implies $\sigma \geq \sigma_0$ on $[R,0]$ and hence $\scal_{F_{\gamma}} > 0$ on $\Sigma$ by \pref{main_est}.
\end{proof}

Finally, we are able to prove the main result of this section.
Roughly speaking, it says that we can choose scalar positive bending profiles which interpolate between the normally spherical immersions near $S$ resulting from \pref{local_deform} and scalar positive immersions which are ``parallel'' to the normal field $\xi$.
This is done by means of a suitable bending profile $\gamma$ as in Figure \ref{figure:bending} and is an essential ingredient for completing the scalar positive extrinsic surgery in \sref{extrsurg}.

\begin{prop}[Construction of bending profiles]\label{bending_curve}
Let $k \geq 3$.
There exists $0 < \rho \leq \rho_0$ such that for all $0 < \rho' \leq \rho/2$  there exists $\tau_0 > 0$ with the following property:
For all $\tau \geq \tau_0$ and all $0 < \rho'' \leq \min \{ \rho', \frac{\pi}{2\tau}\}$ there exists a regular smooth curve $\gamma = (a,b) \colon [R,0] \to \R^2$ of extent $2\rho'$ satisfying:
\begin{enumerate} [ (i)]
 \item \label{scalpos} The immersion $F_{\gamma} \colon \Sigma \to \R^N$ is scalar positive;
 \item \label{nearR} $\gamma(s) = \tau^{-1} \big( \sin(\tau (\rho'' + s ) ), 1 - \cos(\tau (\rho'' + s))\big)$ near $s=0$;
 \item \label{near0} $\gamma(s) =(a(R), b(R)+ s-R)$ near $s=R$.
 \end{enumerate}
\end{prop}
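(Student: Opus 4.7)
My plan is to parametrize unit-speed curves by their tangent-angle function $\theta$, so that $(a',b')=(\cos\theta,\sin\theta)$, $\kappa=\theta'$, and $\sigma=\sin\theta/a$, and to prescribe the ratio $\chi(s):=\theta'(s)/\sigma(s)$ as a smooth function taking values in $[-\lambda,1]$. The three model regimes then sit at canonical values of $\chi$: the circular arc of~(ii) satisfies $\chi\equiv 1$ (since $\kappa=\tau=\sigma$ along it), the vertical line of~(iii) satisfies $\chi\equiv 0$, and the curve produced by \lref{bc} satisfies $\chi\equiv-\lambda$. Because $[-\lambda,1]$ is exactly the controlled range, any smooth $\chi$-interpolation produces a controlled curve, to which \pref{main_est} applies once $\sigma\geq\sigma_0$ and the extent is bounded.

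For the setup, set $(x_0,y_0):=\tau^{-1}(\sin(\tau\rho''),1-\cos(\tau\rho''))$ and $(u_0,v_0):=(\cos(\tau\rho''),\sin(\tau\rho''))$. Then $u_0,v_0>0$, $|(x_0,y_0)|=2\tau^{-1}\sin(\tau\rho''/2)\leq\rho''\leq\rho'$, and $v_0/x_0=\tau$. Taking $\tau\geq\tau_0:=\max\{\sigma_0,\pi/(2\lambda\rho')\}$ with $\rho,\sigma_0$ from \pref{main_est}, \cref{bendingcurve} applies with this initial data and produces a reference curve $\gamma_\star\colon[R_\star,0]\to\R^2$ along which $\sigma$ is monotone and bounded below by $\sigma_0$, the extent is $\leq 2\rho'$, and $\gamma_\star'(R_\star)=(0,1)$. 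Fix a small $\delta>0$ and a parameter $R_0<-2\delta$ to be tuned, and choose a small $\delta'>0$. Let $\chi\colon(-\infty,0]\to[-\lambda,1]$ be a smooth function satisfying $\chi\equiv 1$ on $[-\delta,0]$, monotonically decreasing from $1$ to $-\lambda$ on $[-2\delta,-\delta]$, $\chi\equiv-\lambda$ on $[R_0,-2\delta]$, monotonically decreasing from $-\lambda$ to $0$ on $[R_0-\delta',R_0]$, and $\chi\equiv 0$ for $s\leq R_0-\delta'$. Solve the coupled ODE $\theta'=\chi\cdot\sin\theta/a$, $a'=\cos\theta$, with initial data $\theta(0)=\tau\rho''$, $\gamma(0)=(x_0,y_0)$. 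ODE uniqueness forces $\gamma$ on $[-\delta,0]$ to coincide with the circular arc of~(ii), while on $[R_0,-2\delta]$ the equation reduces to \lref{bc}'s equation $\theta'=-\lambda\sigma$. Continuous dependence on $\chi$ shows that, for $\delta$ small, $\gamma$ shadows $\gamma_\star$ and inherits $a>0$, $\sigma\geq\sigma_0$, and extent $\leq 2\rho'$. In particular $\theta$ strictly increases (as $s$ decreases) through $\pi/2$ at some interior $s=s_\star$ of the $\chi\equiv-\lambda$ stretch.

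The delicate point is to tune $R_0$ so that when $\chi$ first reaches $0$ the angle $\theta$ has become exactly $\pi/2$; otherwise the straight extension to the left would fail to be vertical, breaking~(iii). This amounts to a one-parameter shooting in $R_0$: the terminal value $\theta(R_0-\delta')$ depends continuously on $R_0$, and one checks that it overshoots $\pi/2$ when $R_0$ is chosen near $s_\star$ (so $\theta$ has not yet reached $\pi/2$ when the transition starts, and continues to climb through the transition) and undershoots when $R_0$ is taken sufficiently to the right of $s_\star$ (so the transition ends before $\theta$ attains $\pi/2$). An intermediate $R_0$ lands $\theta$ exactly on $\pi/2$. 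Beyond that point $\chi\equiv 0$ and $\theta\equiv\pi/2$ combine to make $\gamma$ a vertical line, so extending as far left as any desired $R$ realizes~(iii). Conditions (ii) and (iii) thus hold by construction, and (i) follows from the controlled-plus-$\sigma\geq\sigma_0$ estimate via \pref{main_est}. The main obstacle is precisely this shooting step: it requires monotonic dependence in $R_0$ of the terminal angle in a neighborhood of the target value, which I expect to justify from $\theta'<0$ at $s_\star$ together with the smooth dependence of the ODE on the cutoff parameter.
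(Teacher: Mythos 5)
Your construction is correct in outline, but it takes a genuinely different route from the paper. The paper first solves the ODE $\kappa=-\lambda\sigma$ of \lref{bc} with the circle-arc data at $s=0$; that solution automatically terminates with vertical tangent $\gamma'(R)=(0,1)$, so only the $2$-jets at the two endpoints are wrong, and these are corrected by linearly interpolating $\gamma$ with the circular arc (resp.\ the vertical line): the interpolated curvature stays in the controlled range, \pref{main_est} gives positivity along the end slice, and the local flexibility lemma \cite{bh} globalizes the deformation. You instead build the whole profile in one stroke by prescribing $\chi=\kappa/\sigma$ as a cutoff with values in $[-\lambda,1]$, so controlledness is automatic, (ii) holds by ODE uniqueness, and (iii) is arranged by shooting in the transition point $R_0$. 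This does work, and the loose ends you leave can all be closed: from $\theta'=\chi\sigma$ and $a'=\cos\theta$ one gets $\sigma'=\frac{\sin\theta\cos\theta}{a^2}(\chi-1)\le 0$, so $\sigma\ge\sigma(0)=\tau\ge\sigma_0$ along the entire curve (no shadowing needed); the conserved quantity $b'a^{\lambda}$ on the $\chi\equiv-\lambda$ stretch bounds $a$ away from $0$ and bounds the length of that stretch by roughly $\pi/(2\lambda\tau)\le\rho'$, which gives the extent estimate once $\delta,\delta'$ and the final vertical piece are chosen small; and for the shooting step only continuity of the terminal angle in $R_0$ plus the intermediate value theorem is required, not the monotone dependence you flag as the main obstacle. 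Two small cautions: the extent condition is a strict bound, so you cannot ``extend as far left as any desired $R$'' --- a short vertical piece near $s=R$ is all the proposition (and the gluing in \sref{extrsurg}) needs; and your transition lengths must be taken small relative to $1/\tau$ and to the lower bound on $a$ so that $\theta$ stays in $(0,\pi/2]$. In comparison, your approach avoids the flexibility lemma here entirely and yields an explicit controlled profile at the cost of the shooting argument and extra bookkeeping, whereas the paper gets the exact vertical tangent for free from \lref{bc} and outsources the endpoint matching to a tool it needs anyway in \pref{local_deform}.
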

\begin{proof} We claim that the assertion holds for $\rho$ from \pref{main_est}.
Let $0 < \rho' \leq \rho/2$ and set $\tau_0 := \max \{\sigma_0 , \frac{\pi}{2 \lambda \rho'} \} $ with $\sigma_0$ from \pref{main_est}.
Pick $\tau \geq \tau_0$ and $0 < \rho'' \leq \min \{ \rho', \frac{\pi}{2\tau} \} $.

For $(x,y):=\tau^{-1}\big(\sin(\tau \rho''), 1-\cos(\tau \rho'')\big)$ and $(u,v):=(\cos(\tau \rho''), \sin(\tau \rho''))$ \cref{bendingcurve} applies since $0 < \tau \rho'' \leq \frac{\pi}{2}$, hence $u,v > 0$,  $|(x,y)| \leq \rho'' \leq \rho'$ and $v/x = \tau \geq \tau_0 \geq \max \{ \sigma_0 , \frac{\pi}{2 \lambda \rho'} \}$.
The resulting curve $\gamma \colon [R, 0] \to \R^2$ is regular, of extent $2\rho'$ and satisfies \eqref{scalpos}.
The proof will be completed once we deform $\gamma$ near $0$ in such a way that \eqref{nearR} holds as well, \eqref{near0} being treated in an analogous manner.

In order to do this, let $\varepsilon : = \min \{ |R/2| , \rho''/2\} >0$ and consider the continuous family $\gamma_t \colon [-\varepsilon,0] \to \R^2$, $t \in [0,1]$, of regular smooth curves of extent $2\rho'$ defined by
\[
\gamma_t(s) = (a_t(s), b_t(s)) := (1-t) \, \gamma (s) + t \, \tau^{-1} \big( \sin(\tau (\rho'' + s ) ), 1 - \cos(\tau (\rho'' + s))\big) \, .
\]
Then the $1$-jet $j^1 \gamma_t(0)$ is constant in $t$ by \lref{bc} \eqref{xx} and \eqref{aa}, and hence the same holds for $\sigma_t(0) := \frac{b_t'(0)}{a_t(0)}$.

For the curvature $\kappa_t (0)$ of $\gamma_t$ at $s = 0$ we obtain the linear interpolation
$$
\kappa_t(0) = (1-t) \, \kappa(0) + t \, \tau = - (1-t) \, \lambda \sigma_t(0) + t \, \sigma_t(0) \, .
$$
In particular, $- \lambda \sigma_t(0) \leq \kappa_t(0) \leq \sigma_t(0)$ for all $t \in [0,1]$, and thus each $\gamma_t$ is controlled and of unit speed at $s = 0$.

\pref{main_est} shows that $\scal_{F_{\gamma_t}} > 0$ along $S_1(\NUMS) \times \{0\} \subset \Sigma$ for  all $t \in [0,1]$ by the choice of $\sigma_0$.
Passing to a smaller $\varepsilon >0$ if necessary, this implies that $F_{\gamma_t} \colon S_1(\NUMS) \times ( - \varepsilon, 0] \to \R^{N}$ is a scalar positive immersion for all $t \in [0,1]$.
Since $\gamma$ being regular and of extent $2\rho'$ and $F_{\gamma}$ being scalar positive defines an open partial differential relation on the $2$-jets $j^2 \gamma$ of smooth curves $[R,0] \to \R^2$ and since $j^1 \gamma_t(0)$ is constant in $t$, the local flexibility lemma \cite{bh}*{Theorem 1} applies.
Hence there exists $0 < \varepsilon_0 < \varepsilon$ and a continuous family  $\Gamma_t \colon [R,0] \to \R^2$, $t \in [0,1]$, of regular smooth curves of extent $2\rho'$ with $\Gamma_0 = \gamma$ and such that the $\Gamma_t$ coincide with $\gamma_t$ on $(- \varepsilon_0,0]$,  are constant in $t$ on $[R,- \varepsilon]$ and induce scalar positive immersions $f_{\Gamma_t }\colon \Sigma \to\R^{N}$.
We now replace $f$ by $\Gamma_1$, thus achieving \eqref{nearR}.
\end{proof}

\usetikzlibrary{decorations.markings}
\tikzset{middlearrow/.style={
 decoration={markings,
 mark= at position 0.6 with {\arrow{#1}} ,
 },
 postaction={decorate}
 }
}
\begin{figure}
\begin{tikzpicture}[scale=1.5]
\filldraw[gray!4!white] (0,0) circle[radius=1.8];
\draw[gray!80!white, -> ] (210:0.6) -- (210:0.2);
\draw[gray!80!white, -> ] (210:1.2) -- (210:1.6);
\draw[gray!80!white] node at (210:0.9) {$2\rho'$};
\draw[->] (0,0)-- (1.5,0) -- node[near end, above] {$a$} (2,0);
\draw[->] (0,-2) -- (0,1) -- node[near end, right] {$b$} (0,2);
\begin{scope}[shift={(0,2)}]
\draw[dotted, thick, black] (270:2) arc [radius = 2cm, start angle = 270 , end angle = 315 ];
\draw[very thick, blue] ( 315:2 ) arc [radius = 2cm, start angle = 315 , end angle = 340 ];
\draw[very thick,dotted, blue] ( 340:2 ) arc [radius = 2cm, start angle = 340 , end angle = 350 ];
\draw[red, very thick, middlearrow={latex reversed} ] (315:2) to [out=225, in=90] (0.5,-3) ;
\filldraw[red] (315:2) circle[radius=1pt];
\end{scope}
\begin{scope}[shift={(0.5,2)}];
\draw[red] node at (315:2) {$\gamma(0)$};
\end{scope};
\draw[very thick, green] (0.5,-1) to (0.5,-1.5) ;
\draw[very thick, green, dotted] (0.5,-1.5) to (0.5,-1.9);
\filldraw[red] (0.5,-1) circle[radius=1pt];
\filldraw[black] (0,0) circle[radius=1pt];
\draw[red] node at (1,-1) {$\gamma(R)$};
\draw[red] node at (0.5,-0.3) {$\gamma$};
\end{tikzpicture}
\caption{In red: the bending profile in \pref{bending_curve}.}
\label{figure:bending}
\end{figure}

\begin{rem} \label{compareGL} Our discussion may be adapted to provide an  alternative approach to the surgery lemma in \cite{gl} by considering the embedding $\tilde F_{\gamma}\colon \Sigma \hookrightarrow M \times \R$,
\[
 (q, \omega, s) \mapsto \big( \exp^{\perp} (a(s) \, \omega), b(s) \big) \in M \times \R ,
\]
which is defined whenever the extent of $\gamma$ is smaller than the normal injectivity radius of $S \subset M$.
Roughly speaking, in \eqref{defFg} the normal field $\xi$  is replaced by the unit vector field $\partial_t \in \Gamma(T( M \times \R))$ pointing in the $\R$-direction.
In this situation the generalized Gauss lemma for $\exp^{\perp}$ implies that  $N(p)=N^{\perp}(p)$ for all $p \in \Sigma$ with respect to the embedding $\tilde F_{\gamma}$, rendering an estimate as in  Lemma \ref{estnormal} obsolete.

Note that contrary to \cite{gl}*{Equation (1') on p.~429}, our  \pref{main_est} does not yield a positive lower bound for $\scal_{F_\gamma}$ in case $b'=0$ (hence $\sigma = 0$).
This is related to the fact that  the target of $F_\gamma$ is flat $\R^N$, whereas the one of $\tilde F_{\gamma}$ is scalar positive.
Hence  in our extrinsic setting the ``initial stage'' of the bending process requires a different approach  than in \cite{gl}.
This is provided by our \pref{local_deform} which relies on the local flexibility lemma \cite{bh}*{Theorem 1}.
\end{rem}

\section{Extrinsic scalar positive surgery} \label{extrsurg}

Here we combine the previous constructions in order to perform the extrinsic surgery.
At the end of this section we give the proofs of our two main results in the introduction.

Let $f \colon M \to \R^{N}$ be a scalar positive immersion where $M$ is of dimension $n$ and let $S \subset M$ be a closed embedded submanifold of dimension $d$ with normal bundle $\nu_S^M \to S$.
Assume that $S \subset M$ has codimension   $n - d = k \geq 3$.
The following brings together the main results of Sections \ref{nomsph} and \ref{bending}.

\begin{prop} \label{ultimate}
For all $\varepsilon, \lambda_0 > 0$ there exist constants $\rho, \tau >0$ with the following properties:
\begin{enumerate}[(i)]
 \item The normal exponential map $\exp^{\perp} \colon \NUMS \to M$ induces a diffeomorphism $D_{\rho}(\nu_S^M) \approx \overline{U_{\rho}(S)}$;
 \item \label{magic_deform} There is a continuous family $f_t \colon M \to \R^N$,  $t \in [0,1]$, of scalar positive immersions   such that  $f_0=f$ and such that  for all $(q, \omega, s) \in U_{\rho}(S)$  we have
 \[
 f_1(q, \omega, s) = f(q) + \tau^{-1} \sin ( \tau s) \omega + \tau^{-1}( 1- \cos( \tau s))\xi(q) \, ;
 \]
 \item \label{bendfinal} There exists $R < 0$ and a regular smooth curve $\gamma = (a,b) \colon [R,0] \to \R^2$ satisfying
$$
\gamma(s) =
\begin{cases}
 \tau^{-1} \big( \sin (\tau ( \rho +s)), 1 - \cos ( \tau ( \rho +s )) \big) & {\rm near \ s=0},\\
 \left(a(R), b(R) + s-R\right) & {\rm near \ } s=R,
 \end{cases}
$$
where $0 < a(R) < \lambda_0$ and $- \varepsilon < b(R) < \varepsilon$, and such that the map $F_{\gamma} \colon S_{1}(\NUMS) \times [R,0] \to \R^N$, $F_\gamma(q, \omega,s)= f(q) + a(s) \omega + b(s) \xi(q)$, is a scalar positive immersion.
\end{enumerate}
\end{prop}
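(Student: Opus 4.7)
The plan is to assemble $\tau$ and $\rho$ by combining the tubular neighborhood theorem with \pref{local_deform} and \pref{bending_curve}, choosing parameters so that every constraint is satisfied at once. First, the tubular neighborhood theorem supplies some $\rho^{(0)} > 0$ such that $\exp^\perp$ restricts to a diffeomorphism $D_{\rho^{(0)}}(\NUMS) \approx \overline{U_{\rho^{(0)}}(S)}$; this handles (i) for any $\rho \leq \rho^{(0)}$. Since $k = n-d \geq 3 \geq 2$, \pref{local_deform} then produces $\tau_1 > 0$ such that for every $\tau \geq \tau_1$ there exists $0 < \rho_1 \leq \rho^{(0)}$ and a continuous family $f_t \colon M \to \R^N$ of scalar positive immersions with $f_0 = f$ and $f_1|_{U_{\rho_1}(S)} = G_\tau|_{U_{\rho_1}(S)}$. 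Translating $G_\tau$ into the polar coordinates $(q, \omega, s)$ identified via $\exp^\perp$ gives exactly the closed formula demanded in (ii).

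Next I invoke \pref{bending_curve} with the same $k \geq 3$: it supplies a threshold $\rho_2 > 0$ and, for each $0 < \rho' \leq \rho_2/2$, a further threshold $\tau_2 > 0$ yielding, for every $\tau \geq \tau_2$ and every $0 < \rho'' \leq \min\{\rho', \pi/(2\tau)\}$, a regular unit-speed curve $\gamma$ of extent $2\rho'$ with $F_\gamma$ scalar positive and with the endpoint normal forms
\[
\gamma(s) = \tau^{-1}\bigl(\sin(\tau(\rho''+s)),\, 1-\cos(\tau(\rho''+s))\bigr) \ \text{near } s = 0, \quad \gamma(s) = \bigl(a(R),\, b(R) + s - R\bigr) \ \text{near } s = R.
\]
The formula near $s = 0$ is precisely the restriction to the meridian of the spherical-cap immersion in (ii) shifted by $\rho''$, so the bending profile joins smoothly to the normally spherical immersion at the boundary sphere $\{s = \rho''\}$ of the tube.

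To glue the two outputs together, I first shrink $\rho'$ so that $\rho' \leq \min\{\rho_1, \rho_2/2, \lambda_0/2, \varepsilon/2\}$; the last two bounds ensure that the endpoint $\gamma(R)$, which lies in the disc of radius $2\rho'$ in $\R^2$, lands in the window $\{0 < a < \lambda_0,\ |b| < \varepsilon\}$ demanded in (iii), using additionally that $a(R) > 0$ by \lref{bc}\,\eqref{xx}. I then pick any $\tau \geq \max\{\tau_1, \tau_2\}$ with $\pi/(2\tau) \leq \rho'$ and set $\rho := \rho'' := \min\{\rho', \pi/(2\tau)\}$. A single pair $(\tau, \rho)$ now simultaneously realizes (i), (ii), and (iii), where in (ii) one reads the family $f_t$ directly off \pref{local_deform} with this $\tau$, and in (iii) one uses the curve $\gamma$ from \pref{bending_curve} with $\rho'' = \rho$. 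The only non-routine part is the bookkeeping of the interdependencies between $\tau$, $\rho$, $\rho'$, $\rho''$; the scalar positivity and the endpoint shapes are all supplied directly by the cited propositions, and the match between (ii) and (iii) at $s = 0$ is exactly the normalization built into \pref{bending_curve}\,\eqref{nearR}.
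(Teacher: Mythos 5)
Your overall strategy coincides with the paper's: obtain the normally spherical deformation from \pref{local_deform}, the bending profile from \pref{bending_curve}, and match the two by a common choice of $\tau$ and $\rho=\rho''$. However, the parameter bookkeeping --- which you yourself identify as the only non-routine point --- is circular as written. In \pref{local_deform} the radius you call $\rho_1$ is produced \emph{after} $\tau$ is fixed: for each $\tau\geq\tau_1$ there exists some $\rho_1=\rho_1(\tau)$, and nothing gives a radius uniform in $\tau$ (indeed one expects it to shrink as $\tau$ grows). On the other hand, in \pref{bending_curve} the threshold $\tau_2$ is produced only after $\rho'$ is fixed. You nevertheless ``first shrink $\rho'$ so that $\rho'\leq\min\{\rho_1,\rho_2/2,\lambda_0/2,\varepsilon/2\}$'' and only afterwards pick $\tau\geq\max\{\tau_1,\tau_2\}$: at the moment you impose $\rho'\leq\rho_1$ the quantity $\rho_1$ is not yet defined, and cannot be, since it depends on a $\tau$ that in turn depends on $\rho'$ through $\tau_2(\rho')$. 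Concretely, your final choice forces $\rho=\rho''=\pi/(2\tau)$, and statement (ii) on $D_\rho(\NUMS)$ would require $\pi/(2\tau)\leq\rho_1(\tau)$, which nothing in \pref{local_deform} guarantees; so (ii) is not secured by the argument as written.

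The repair is precisely the order of choices in the paper's proof: fix $\rho'$ using only $\min\{\rho_2/2,\lambda_0/2,\varepsilon/2\}$ (no reference to $\rho_1$), then choose $\tau\geq\max\{\tau_1,\tau_2(\rho')\}$, then let \pref{local_deform} supply $\rho_1(\tau)$, and finally set $\rho:=\rho'':=\min\{\rho_1(\tau),\,\rho',\,\pi/(2\tau)\}$. This is legitimate because the conclusion of \pref{local_deform} persists when its radius is shrunk, and \pref{bending_curve} accepts \emph{any} $0<\rho''\leq\min\{\rho',\pi/(2\tau)\}$, so the bending profile can be built for the radius dictated by \pref{local_deform} rather than the other way around. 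With this ordering the rest of your argument goes through as you state it, including $0<a(R)<\lambda_0$ and $-\varepsilon<b(R)<\varepsilon$ from the extent bound $2\rho'\leq\min\{\lambda_0,\varepsilon\}$ together with $a>0$ from \lref{bc}, and the matching of (ii) and (iii) at $s=0$ from \pref{bending_curve}\,\eqref{nearR} with $\rho''=\rho$.
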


\begin{proof} Choose $ \rho$ as in \pref{bending_curve} and set $\rho' := \min \{ \rho/2, \lambda_0/2, \varepsilon/ 2\}$.
For this $\rho'$ let $\tau_0$ be chosen as in \pref{bending_curve}.
By \pref{local_deform} there exist $\tau \geq \tau_0$ and  $0 < \rho'' \leq \min\{ \rho', \frac{\pi}{2\tau} \}$ such that $f$ can be deformed into $f_1$ through scalar positive immersions in such a way that the formula for $f_1$ in \eqref{magic_deform} holds for all $(q, \omega, s) \in U_{\rho''}(S)$.
Furthermore, by \pref{bending_curve}, we find $\gamma$ of extent $2 \rho' \leq \min\{ \lambda_0, \varepsilon\}$ with properties as described in \eqref{bendfinal}, except that  the formula for $\gamma(s)$ holds with $\rho$ replaced by $\rho''$.
We conclude that all the assertions of \pref{ultimate} hold for $\rho:= \rho''$ and $\tau$.
\end{proof}

Now let $S$ be additionally diffeomorphic to the unit $d$-sphere $\mathbb{S}^d \subset \R^{d+1}$ and fix a diffeomorphism $S \approx \mathbb{S}^d$.
Furthermore, let the normal bundle $\nu_S^M \to S$ be trivialisable and fix an orthonormal frame $(e_1, \ldots, e_k)$ of $\nu_S^M$.
Finally, let $F\colon \mathbb{D}_{1+\varepsilon}^{d+1} \to \R^{N}$ be an immersion of the closed $(1+\varepsilon)$-disc in $\R^{d+1}$ for some $0 < \varepsilon < 1$ together with a linear independent family of sections $(E_1, \ldots, E_k)$ of the trivial bundle $\mathbb{D}_{1+\varepsilon}^{d+1} \times \R^{N} \to \mathbb{D}_{1+\varepsilon}^{d+1}$  which spans a bundle having zero intersection with $T\mathbb{D}_{1+\varepsilon}^{d+1}$ and is compatible with $f$ and $(e_1, \ldots, e_k)$ in the following sense: For all $\omega \in S \approx \mathbb{S}^d = S_1(\R^{d+1} ) \subset \mathbb{D}_{1+\varepsilon}^{d+1}$ and $r \in [1 - \varepsilon, 1+ \varepsilon ]$, we have
\[
 F(r\omega) = f(\omega) + (r-1) \xi(\omega) \, , \quad E_i (r\omega) = e _i(\omega) \text{ for } i = 1, \ldots, k \, .
\]
Note that under these conditions  the family $(E_1, \ldots, E_k)$ is in general not normal to $T\mathbb{D}_{1+\varepsilon}^{d+1}$.

By an argument similar to the proof of \pref{tub_imm} we find $\lambda_0 > 0 $ such that for all $0 < \lambda \leq \lambda_0$ the map $\mathscr{F}_{\lambda} \colon\mathbb{D}_{1+\varepsilon}^{d+1} \times \skmu \to \R^N$,
\[
 \mathscr{F}_{\lambda} \big(q, v_1, \ldots, v_{k} \big) := F(q) + \lambda \, \sum_{i=1}^{k} v_i E_i(q),
\]
is a scalar positive immersion.

We apply  \pref{ultimate} and consider the smooth manifold
\begin{equation} \label{piecetogether}
\hat M := M \setminus U_\rho(S) \; \bigcup \; \Sigma_{\gamma} \; \bigcup \; \mathbb{D}^{d+1}_{1+b(R) } \times \skmu
\end{equation}
where we glue
$$
\partial \big( M \setminus U_{\rho}(S) \big)  \approx S_{\rho}(\NUMS)
\ \ \ \rightleftharpoons\ \ \
S_1(\NUMS) \times \{ 0 \} \subset \partial \Sigma_\gamma$$
along the  dilation map $S_{\rho} (\NUMS) \approx S_1(\NUMS)$ and
$$
S_1(\NUMS) \times \{R\} \subset \partial \Sigma_\gamma
\ \ \ \rightleftharpoons\ \ \
\partial \big( \mathbb{D}^{d+1}_{1+b(R) } \times \skmu \big)
$$
along the map
\[
S_1(\NUMS) \times \{R\} \approx \mathbb{S}^d \times \skmu \approx \partial \big( \mathbb{D}^{d+1}_{1+b(R)} \times \skmu \big)
\]
which is induced by the given diffeomorphism $S \approx \mathbb{S}^d$,  the dilation map $\mathbb{S}^d = S_1(\R^{d+1}) \approx S_{1+b(R)} (\R^{d+1}) = \partial \mathbb{D}^{d+1}_{1+b(R)}$ and the frame $(e_1, \ldots, e_k)$.

As usual we say that the manifold $\hat M$ is obtained from $M$ by a {\em surgery} along $S \subset M$ with respect to the normal frame $(e_1, \ldots, e_k)$.
 By \pref{ultimate} \eqref{magic_deform} and \eqref{bendfinal}, the maps $f_1$ on $M \setminus U_{\rho}(S)$, $F_{\gamma}$ on $\Sigma_{\gamma}$ and $\mathscr{F}_{a(R)}$ on $\mathbb{D}^{d+1}_{1+b(R)} \times \skmu $ are compatible at the gluing regions in $\hat M$ and combine to a scalar positive smooth immersion $\hat f \colon \hat M \to \R^N$.
In terms of bending profiles near $S \subset M$ the images of the first, second and third pieces in \eqref{piecetogether} under $\hat{f}$ correspond to the blue, red and green pieces in Figure \ref{figure:bending}.

\medskip

We finish by proving our main results.

\begin{proof}[Proof of \tref{extr_surg}]
Since $2d +1 \leq N$ the immersion $\mathbb{S}^d \approx S \stackrel{f|_{S}}{\rightarrow}  \R^N$ extends to an immersion $F \colon \mathbb{D}_{1+\varepsilon}^{d+1} \to \R^N$ by \cite{s2}*{Theorem B} for some $0 < \varepsilon < 1$ such that $F(r\omega) = f(\omega) + (r-1) \xi(\omega)$ for all $r \in [1- \varepsilon, 1+ \varepsilon]$ and $\omega \in S \approx \mathbb{S}^d$.

The manifold $\mathbb{D}^{d+1}_{1+ \varepsilon}$ is contractible and hence the normal bundle $\nu_F \to \mathbb{D}^{d+1}_{1+ \varepsilon}$, which is of rank $N - d-1$, is trivial.
Since the Stiefel manifold $V_{n-d}(\R^{N-d-1})$ of $(n-d)$-frames in $\R^{N-d-1}$ is $(N-n-2)$-connected and $d \leq N-n-2$, the family $(e_1 , \ldots, e_k)$ (recall $k = n - d$) extends (after possibly decreasing $\varepsilon$) to a linear independent family of sections $(E_1, \ldots, E_k)$ of the trivial $\R^N$-bundle over $\mathbb{D}^{d+1}_{1+\varepsilon}$ with properties as described after the proof of Proposition \ref{ultimate}.
Now the extrinsic surgery construction may be carried out to obtain the required scalar positive immersion $\hat f \colon \hat M \to \R^N$.

The last assertion follows since $\rho_0$ in \pref{local_deform} can be chosen arbitrarily small.
\end{proof}

\begin{proof}[Proof of \tref{main}]
Assume that $M$ is spin.
Since $\alpha(M)=0$, by \cite{ks}*{Proposition 3.3} $M$ is spin bordant to the total space of a fibre bundle $\HP^2 \hookrightarrow V \to B $ with structure group $\Sp(3)$ over a closed spin manifold $B$.
There exists a scalar positive immersion $V \to \R^{2n-1 +\delta(n)}$ as described in Example \ref{Veronese}.
Since $M$ and $V$ are spin bordant, $M$ is simply connected and $\dim M \geq 5$, we can obtain $M$ from $V$ by a finite number of surgeries in codimensions at least $3$, using Smale's handle cancellation technique, compare \cite{gl}.
\tref{main} now follows from \tref{extr_surg} as $2n - 1 + \delta(n) \geq 2n - 1 = n + (n-d) + d - 1 \geq n + 3 + d - 1 = n+d +2$.

If $M$ is not spin, F\"uhring \cite{fu}*{Theorem 1.1} used the methods of \cite{st} and \cite{ks} to show that $M$ is oriented bordant to the total space of a fibre bundle $\CP^2 \hookrightarrow V \to B$ with structure group  $\UU(3) \rtimes \ZZ/2$  over a closed oriented manifold $B$.
There exists a scalar positive immersion $V \to \R^{2n-1 +\delta(n)}$ as described in Example \ref{Veronese}.
Since $M$ and $V$ are oriented bordant,  $M$ is simply connected and not spin and $\dim M \geq 5$, we can obtain $M$ from $V$ by a finite number of surgeries in codimensions at least $3$.
Hence \tref{main} again follows from \tref{extr_surg}.
\end{proof}

\begin{bibdiv}
\begin{biblist}

\bib{bh}{article}{
 AUTHOR = {B\"ar, Christian},
 AUTHOR = {Hanke, Bernhard},
 TITLE = {Local flexibility for open partial differential relations},
 JOURNAL = {Comm. Pure Appl. Math.},
 EPRINT = {https://onlinelibrary.wiley.com/doi/pdf/10.1002/cpa.21982}
}

\bib{co}{article}{
 AUTHOR = {Cohen, Ralph L.},
 TITLE = {The immersion conjecture for differentiable manifolds},
 JOURNAL = {Ann. of Math. (2)},
 VOLUME = {122},
 YEAR = {1985},
 NUMBER = {2},
 PAGES = {237--328},
}

\bib{ef}{article}{
 AUTHOR = {Ebert, Johannes},
 AUTHOR = {Frenck, Georg},
 TITLE = {The Gromov-Lawson-Chernysh surgery theorem},
 JOURNAL = {Bol. Soc. Mat. Mex.},
 VOLUME = {27},
 PAGES = {Article No. 37, 43 pages},
 YEAR = {2021},
}

\bib{fu}{article}{
 AUTHOR = {F\"uhring, Sven},
 TITLE = {Bordism and projective space bundles},
 EPRINT = {https://arxiv.org/abs/2006.15394}
}

\bib{gr70}{article}{
   AUTHOR = {Gromov, Mikhael},
   AUTHOR = {Rokhlin, Vladimir},
     TITLE = {Imbeddings and immersions in {R}iemannian geometry},
   JOURNAL = {Uspehi Mat. Nauk},
      VOLUME = {25},
      YEAR = {1970},
    NUMBER = {5 (155)},
     PAGES = {3--62},
     NOTE={English translation in: Russian Math. Surveys 25 (1970), no. 5, 1–57}
}

\bib{gl}{article}{
 AUTHOR = {Gromov, Mikhael},
 AUTHOR = {Lawson, H. Blaine},
 TITLE = {The classification of simply connected manifolds of positive scalar curvature},
 JOURNAL = {Ann. of Math. (2)},
 VOLUME = {111},
 YEAR = {1980},
 NUMBER = {3},
 PAGES = {423--434},
}

\bib{gr86}{book}{
 AUTHOR = {Gromov, Mikhael},
 TITLE = {Partial differential relations},
 SERIES = {Ergebnisse der Mathematik und ihrer Grenzgebiete (3) [Results
 in Mathematics and Related Areas (3)]},
 VOLUME = {9},
 PUBLISHER = {Springer-Verlag, Berlin},
 YEAR = {1986},
 PAGES = {x+363},
}

\bib{gu01}{incollection}{
 AUTHOR = {Guijarro, Luis},
   TITLE = {Isometric immersions without positive {R}icci curvature},
 BOOKTITLE = {Global differential geometry: the mathematical legacy of
              {A}lfred {G}ray ({B}ilbao, 2000)},
    SERIES = {Contemp. Math.},
    VOLUME = {288},
     PAGES = {339--342},
 PUBLISHER = {Amer. Math. Soc., Providence, RI},
      YEAR = {2001},
}

\bib{Hirsch}{article}{
 AUTHOR = {Hirsch, Morris W.},
 TITLE = {Immersions of manifolds},
 JOURNAL = {Trans. Amer. Math. Soc.},
 VOLUME = {93},
 YEAR = {1959},
 PAGES = {242--276},
}

\bib{Hirsch2}{article}{
    AUTHOR = {Hirsch, Morris W.},
     TITLE = {On imbedding differentiable manifolds in euclidean space},
   JOURNAL = {Ann. of Math. (2)},
     VOLUME = {73},
      YEAR = {1961},
      PAGES={566--571}
 }

\bib{Hitchin}{article}{
 AUTHOR = {Hitchin, Nigel},
 TITLE = {Harmonic spinors},
 JOURNAL = {Adv. Math.},
 VOLUME = {14},
 YEAR = {1974},
 PAGES = {1--55},
}

\bib{ks}{article}{
 AUTHOR = {Kreck, Matthias},
 AUTHOR = {Stolz, Stephan},
 TITLE = {{${\bf H}{\rm P}^2$}-bundles and elliptic homology},
 JOURNAL = {Acta Math.},
 VOLUME = {171},
 YEAR = {1993},
 NUMBER = {2},
 PAGES = {231--261},
}

\bib{nash}{article}{
 AUTHOR = {Nash, John},
 TITLE = {The imbedding problem for {R}iemannian manifolds},
 JOURNAL = {Ann. of Math. (2)},
 VOLUME = {63},
 YEAR = {1956},
 PAGES = {20--63},
}

\bib{sand}{article}{
 AUTHOR = {Sanderson, Brian},
 AUTHOR = {Schwarzenberger, Rolph},
 TITLE = {Non-immersion theorems for differentiable manifolds},
 JOURNAL = {Math. Proc. Cambridge Philos. Soc.},
 VOLUME = {59},
 YEAR = {1963},
 PAGES = {319--322},
}

\bib{SY}{article}{
 AUTHOR = {Schoen, Richard},
 AUTHOR = {Yau, S. T.},
 TITLE = {On the structure of manifolds with positive scalar curvature},
 JOURNAL = {Manuscripta Math.},
 VOLUME = {28},
 YEAR = {1979},
 NUMBER = {1-3},
 PAGES = {159--183},
}

\bib{s2}{article}{
 AUTHOR = {Smale, Stephen},
 TITLE = {The classification of immersions of spheres in Euclidean spaces},
 JOURNAL = {Ann. of Math. (2)},
 VOLUME = {69},
 YEAR = {1959},
 PAGES = {327--344},
}

\bib{st}{article}{
 AUTHOR = {Stolz, Stephan},
 TITLE = {Simply connected manifolds of positive scalar curvature},
 JOURNAL = {Ann. of Math. (2)},
 VOLUME = {136},
 YEAR = {1992},
 NUMBER = {3},
 PAGES = {511--540},
}

\bib{ta68}{article}{
 AUTHOR = {Tai, Shin-sheng},
 TITLE = {Minimum imbeddings of compact symmetric spaces of rank one},
 JOURNAL = {J. Diff. Geom.},
 VOLUME = {2},
 YEAR = {1968},
 PAGES = {55--66},
}

\bib{ta04}{article}{
 AUTHOR = {Tang, Zizhou},
     TITLE = {Some existence and nonexistence results of isometric
              immersions of {R}iemannian manifolds},
   JOURNAL = {Commun. Contemp. Math.},
      VOLUME = {6},
      YEAR = {2004},
    NUMBER = {6},
     PAGES = {867--879},
 }

\end{biblist}
\end{bibdiv}

\end{document}